\title{Obstructions to Lagrangian concordance}
\author{Christopher R.\ Cornwell}
\address{CIRGET \\ Universit\'e du Qu\'ebec \`a Montr\'eal}
\email{cornwell@cirget.ca}
\author{Lenhard Ng}
\address{Department of Mathematics \\ Duke University}
\email{ng@math.duke.edu}
\author{Steven Sivek}
\address{Department of Mathematics \\ Princeton University}
\email{ssivek@math.princeton.edu}
\def\R{{\mathbb{R}}}
\def\Q{{\mathbb{Q}}}
\newcommand\zt{\mathbb{F}}
\newcommand\zz{\mathbb{Z}}
\newcommand\ssm{\smallsetminus}
\newcommand\isomto{\xrightarrow{\sim}}
\newcommand{\tb}{\operatorname{tb}}
\newcommand{\maxtb}{\overline{\tb}}
\newcommand{\mindeg}{\operatorname{min-deg}}
\newcommand{\maxdeg}{\operatorname{max-deg}}
\newcommand{\Lag}{L}
\DeclareFontFamily{U}{mathx}{\hyphenchar\font45}
\DeclareFontShape{U}{mathx}{m}{n}{
      <5> <6> <7> <8> <9> <10>
      <10.95> <12> <14.4> <17.28> <20.74> <24.88>
      mathx10
      }{}
\DeclareSymbolFont{mathx}{U}{mathx}{m}{n}
\DeclareMathAccent{\widecheck}{0}{mathx}{"71}
\newtheorem{theorem}{Theorem}[section]
\newtheorem{lemma}[theorem]{Lemma}
\newtheorem{conjecture}[theorem]{Conjecture}
\newtheorem{corollary}[theorem]{Corollary}
\newtheorem{proposition}[theorem]{Proposition}
\theoremstyle{definition}
\newtheorem{definition}[theorem]{Definition}
\newtheorem{remark}[theorem]{Remark}
\newtheorem{example}[theorem]{Example}
\let\@@pmod\pmod
\DeclareRobustCommand{\pmod}{\@ifstar\@pmods\@@pmod}
\def\@pmods#1{\mkern4mu({\operator@font mod}\mkern 6mu#1)}
\newtheorem*{rep@thm}{\rep@title}
\newcommand{\newreptheorem}[2]{%
\newenvironment{rep#1}[1][0,0]{%
\def\rep@title{#2##1}%
\begin{rep@thm}}%
{\end{rep@thm}}}
\begin{document}

\begin{abstract}
We investigate the question of the existence of a Lagrangian
concordance between two Legendrian knots in $\R^3$. In particular, we
give obstructions to a concordance from an arbitrary knot to the
standard Legendrian unknot, in terms of normal rulings. We also place
strong restrictions on knots that have concordances both to and from
the unknot and construct an infinite family of knots with
non-reversible concordances from the unknot. Finally, we use our
obstructions to present a complete list of knots with up to $14$
crossings that have Legendrian representatives that are Lagrangian slice.
\end{abstract}

\maketitle

%%%%%%%%%%%%%%%%%%%%%%%%%

%!TEX root = concordances.tex

\section{Introduction}
\label{sec:intro}

In symplectic and contact topology, there has been a great deal of
recent interest in the subject of Lagrangian
cobordisms between Legendrian submanifolds; see for example \cite{baldwin-sivek,bty,bst,cghs,chantraine-concordance,
chantraine-symmetric,ehk,ekholmSFT1,
golovko,hayden-sabloff,
sabloff-traynor-leg}. A key motivation is that one can construct a
category whose objects are
Legendrian submanifolds and whose morphisms are exact Lagrangian
cobordisms, and this category fits nicely into Symplectic Field Theory
\cite{egh}. In particular, Legendrian contact homology gives a functor
from the category of Legendrians to the category of differential
graded algebras.

In this paper, we restrict ourselves to the setting of Legendrian
knots in standard contact $\R^3$, and address the question of when
there exists an exact Lagrangian concordance between two such knots.
Let $\R^3$ be equipped with the standard contact structure $\ker
\alpha$ with $\alpha = dz-y\,dx$, and let $\R^4 = \R_t \times \R^3$ be
the symplectization of $\R^3$, with symplectic form
$\omega = d(e^t\alpha)$. Recall that a knot $\Lambda\subset\R^3$ is
\textit{Legendrian} if $\alpha|_\Lambda = 0$, and a surface $L
\subset \R^4$ is \textit{Lagrangian} if $\omega|_L = 0$.

\begin{definition}
Let $\Lambda_-,\Lambda_+\subset\R^3$ be Legendrian knots. A
\textit{Lagrangian cobordism from $\Lambda_-$ to $\Lambda_+$} is an embedded
Lagrangian $L \subset \R^4$ such that
\begin{align*}
L \cap ((-\infty,-T] \times \R^3) &= (-\infty,-T] \times \Lambda_- 
\\
L \cap ([T,\infty) \times \R^3) &= [T,\infty) \times \Lambda_+ 
\end{align*}
for some $T>0$. This cobordism is \textit{exact} if there exists
$f :\thinspace L
\to \R$ such that $df = \alpha|_L$.
A Lagrangian cobordism of genus $0$ (i.e., a cylinder) is a
\textit{Lagrangian concordance}. Define a relation $\prec$ on the set
of Legendrian knots by $\Lambda_- \prec \Lambda_+$ if there is a
Lagrangian concordance from $\Lambda_-$ to $\Lambda_+$.
\end{definition}

\noindent
We note that any Lagrangian concordance is automatically exact: since
$\alpha$ vanishes on $\Lambda_-$ and $\Lambda_-$ generates $H_1(L)$ if
$L$ is a concordance,
$\alpha$ must equal $0$ in the de Rham cohomology of $L$.

It is clear that $\prec$ is transitive. If $\Lambda_0,\Lambda_1$ are
isotopic as Legendrian knots, then $\Lambda_0 \prec \Lambda_1$ and
$\Lambda_1 \prec \Lambda_0$ \cite{chantraine-concordance}. It follows
that $\prec$ descends to a well-defined, reflexive relation on the set of
isotopy classes of Legendrian knots. In \cite{chantraine-symmetric} it
is shown that $\prec$ is not symmetric (see below for further discussion).

At present, it is unknown whether $\prec$ is antisymmetric: that is,
if $\Lambda_0 \prec \Lambda_1$ and $\Lambda_1 \prec \Lambda_0$, must
$\Lambda_0$ be Legendrian isotopic to $\Lambda_1$? We remark that although our
  definition of $\prec$ involves only concordances rather than general
  cobordisms, this is no restriction in this setting: by a result of
  Chantraine \cite{chantraine-concordance}, if there is a Lagrangian
  cobordism $L$ from $\Lambda_0$ to $\Lambda_1$, then $\tb(\Lambda_1)
  - \tb(\Lambda_0) = 2g(L) \geq 0$, where $\tb$ is the
  Thurston--Bennequin number, and so the existence of cobordisms
  in both directions between $\Lambda_0$ and $\Lambda_1$ implies that
  $\tb(\Lambda_0) = \tb(\Lambda_1)$ and the cobordisms are concordances.

The special case when one of the Legendrian knots is the standard
$\tb=-1$ unknot $U$ is of particular interest. A cobordism from $U$ to
a Legendrian knot $\Lambda$ can be filled at the negative end by a
Lagrangian disk, resulting in a \textit{Lagrangian filling} of
$\Lambda$; such fillings are relatively common (see
e.g. \cite{hayden-sabloff}). In the case when the cobordism is a
cylinder ($U \prec \Lambda$), the smooth knot type of $\Lambda$ must
be smoothly slice, and we say that $\Lambda$ is \textit{Lagrangian
  slice}. It is currently unknown whether any Lagrangian slice knot
besides $U$ is concordant \textit{to} $U$.
Chantraine proved in \cite{chantraine-symmetric}, using the
augmentation category of Bourgeois--Chantraine \cite{bc}, that there
is a Lagrangian slice knot $\Lambda$ of type $\overline{9_{46}}$
(the mirror of $9_{46}$) such that $\Lambda \not\prec U$.

One of the goals of this paper is to give strong and easily computable
obstructions to the
existence of a concordance $\Lambda \prec U$. In particular, we show
the following result.

\begin{theorem}[see Theorem~\ref{thm:two-rulings}]
If $\Lambda$ has at least two normal rulings,
\label{thm:two-rulings-intro}
then $\Lambda\not\prec U$.
\end{theorem}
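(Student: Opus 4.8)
The plan is to convert the hypothetical concordance into an algebraic map of Chekanov--Eliashberg invariants and then exploit the extreme rigidity of the unknot: the standard Legendrian unknot $U$ carries a single normal ruling, whereas the hypothesis gives $\Lambda$ at least two. Concretely, I would work with the augmentation category of $\Lambda$ (in the sense of Bourgeois--Chantraine, as refined by Ng--Rutherford--Shende--Sivek--Zaslow), whose objects are augmentations of the DGA $\mathcal{A}(\Lambda)$ and whose morphism spaces are bilinearized Legendrian contact cohomologies. The key structural input is that isomorphism classes of objects in this category are in bijection with $2$-graded normal rulings; the obstruction then comes from showing that a concordance $\Lambda \prec U$ forces $\Lambda$ to have no more isomorphism classes of augmentations than $U$, of which there is exactly one.

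The steps, in order. First, by the functoriality of Legendrian contact homology under exact Lagrangian cobordism (Ekholm--Honda--K\'alm\'an), the concordance $L$ from $\Lambda$ to $U$ induces a unital DGA morphism $\Phi_L \colon \mathcal{A}(U) \to \mathcal{A}(\Lambda)$, which in turn induces an $A_\infty$-functor $F \colon \mathrm{Aug}(\Lambda) \to \mathrm{Aug}(U)$ (pulling back augmentations and acting on morphism complexes). Second, I would show that $F$ is fully faithful: the morphism spaces are bilinearized cohomologies, and the cobordism map on these fits into a long exact sequence whose third terms are controlled by the relative homology of the pair; since $L$ is a concordance it is a topological cylinder with $H_*(L,\Lambda) = 0$, so these terms vanish and the induced maps on all morphism spaces are isomorphisms. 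Third, a fully faithful functor is injective on isomorphism classes of objects, so $\Lambda$ has at most as many isomorphism classes of augmentations as $U$. Fourth, invoking the correspondence between isomorphism classes of augmentations and $2$-graded normal rulings (Henry--Rutherford), together with the fact that $U$ has exactly one normal ruling, I conclude that $\Lambda$ has at most one normal ruling, contradicting the hypothesis of at least two, and hence $\Lambda \not\prec U$.

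The hard part is the second step: establishing that the concordance induces isomorphisms on all bilinearized cohomology groups, i.e.\ that $F$ is fully faithful. This requires setting up the cobordism map at the level of the bilinearized complexes and identifying the error terms in the associated long exact sequence with a topological quantity of $L$ that vanishes precisely because $L$ is a cylinder rather than a higher-genus cobordism; the single-augmentation case, an isomorphism $LCH^{\epsilon}(\Lambda) \cong LCH^{\epsilon_U}(U)$, is the model, and the work is to promote it to the bilinearized setting functorially and compatibly with composition. A secondary technical point is to pin down the correspondence between isomorphism classes of augmentations and normal rulings with the correct grading and coefficient conventions ($2$-graded rulings over $\mathbb{F}_2$), and to verify directly that the augmentation category of $U$ has a single isomorphism class of objects so that the final count is as claimed.
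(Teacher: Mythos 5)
Your strategy is essentially Chantraine's original augmentation-category argument from \cite{chantraine-symmetric}, which this paper deliberately replaces with a much more elementary mechanism; unfortunately, as written it has two genuine gaps, one of which is fatal to the final step. The fatal one is your fourth step: there is no theorem asserting a \emph{bijection} between isomorphism classes of augmentations and ($2$-graded) normal rulings. What Henry--Rutherford (and later Ng--Rutherford--Shende--Sivek--Zaslow) actually prove is a \emph{weighted count}: the augmentation number, respectively the homotopy cardinality of the unital augmentation category, is determined by the ruling polynomial evaluated at $q^{1/2}-q^{-1/2}$. Over $\zt=\zz/2\zz$ every isomorphism class contributes a (possibly negative) power of $2$ to that count, so a ruling polynomial equal to the constant $2$ is perfectly consistent with there being a \emph{single} isomorphism class of augmentations; you cannot conclude ``two rulings $\Rightarrow$ two non-isomorphic augmentations,'' and hence a fully faithful functor to $\mathrm{Aug}(U)$ yields no contradiction. (There is the further wrinkle that the Bourgeois--Chantraine category is non-unital, so ``isomorphism classes of objects'' already needs the later $\mathcal{A}ug_+$ refinement to make sense.) Your second step --- that a concordance induces isomorphisms on all bilinearized cohomologies because $H_*(L,\Lambda)=0$ --- is morally correct but is itself a substantial theorem (the long exact sequence for cobordism maps in the bilinearized, functorial setting is essentially the content of later work of Chantraine--Dimitroglou Rizell--Ghiggini--Golovko); you correctly flag it as the hard part, but it is not available off the shelf and is not proved here.

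The paper avoids all of this machinery. It proves (Theorem~\ref{thm:satellite}) that Lagrangian concordance is preserved by Legendrian satellites, so $\Lambda\prec U$ would give $S(\Lambda,\Delta_2)\prec S(U,\Delta_2)$, where $\Delta_2$ is a positive half twist on two strands. The target $S(U,\Delta_2)$ is a stabilized ($\tb=-3$) unknot and therefore admits \emph{no} augmentation, while Proposition~\ref{prop:ehk} would transport any augmentation of $S(\Lambda,\Delta_2)$ to one of $S(U,\Delta_2)$. The only remaining work is combinatorial: given two distinct normal rulings $\rho_1,\rho_2$ of $\Lambda$, the paper explicitly builds a normal ruling of $S(\Lambda,\Delta_2)$ (placing the half-twist at the rightmost crossing where $\rho_1$ and $\rho_2$ differ and distributing switches between the north and south crossings of the $2$-copy according to $\rho_1$ and $\rho_2$), which yields the needed augmentation of $S(\Lambda,\Delta_2)$ and hence the contradiction. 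Note where the ``two rulings'' hypothesis enters: not through a count of augmentations of $\Lambda$ itself, but through the existence of a single ruling of the twisted $2$-cable. If you want to salvage your approach, you would need both the bilinearized cobordism exact sequence and a genuine lower bound on the number of isomorphism classes of augmentations in terms of rulings; the latter is exactly the point your proposal assumes without justification.
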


\noindent
In particular, the $\overline{9_{46}}$ knot considered by Chantraine satisfies
this condition,
and so we have a new, simpler proof of Chantraine's result. We also
use this result in Theorem~\ref{thm:infinitely-many-irreversible}
to construct an infinite family of Legendrian
knots $\Lambda$ with $U \prec \Lambda$ and $\Lambda \not\prec U$.
It should be noted that Baldwin and 
the third author \cite{baldwin-sivek}
previously presented a different infinite family of non-reversible
concordances, involving stabilized unknots rather than $U$.

The proof of Theorem~\ref{thm:two-rulings-intro} involves two
ingredients. One is the fact 
that exact
Lagrangian cobordisms induce maps on Legendrian contact homology
\cite{ehk}, and in particular that an exact Lagrangian filling of $\Lambda_-$
induces an augmentation for the differential graded algebra of $\Lambda_+$.
The second is a
study of a particular $2$-cable of Legendrian knots to prove
Theorem~\ref{thm:two-rulings-intro}.  This study relies in turn on the following
observation; see Section~\ref{ssec:solid-torus-knots} for the definition of
Legendrian satellite.

\begin{theorem}[see Theorem~\ref{thm:satellite}]
If $\Lambda_-, \Lambda_+$ are Legendrian knots in $\R^3$ such that
$\Lambda_- \prec \Lambda_+$, then their Legendrian satellites satisfy
$S(\Lambda_-,\Lambda') \prec S(\Lambda_+,\Lambda')$ for any
Legendrian solid torus knot $\Lambda' \subset J^1(S^1)$.
\end{theorem}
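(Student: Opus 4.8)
The plan is to prove the theorem by a neighborhood construction: thicken the given concordance $L$ from $\Lambda_-$ to $\Lambda_+$ to a symplectic neighborhood in the symplectization, transport into it the trivial cylinder over the pattern $\Lambda'$, and read off that the two cylindrical ends of the result are exactly the desired satellites. Recall that the Legendrian neighborhood theorem identifies a contact neighborhood of any Legendrian knot $\Lambda\subset(\R^3,\ker\alpha)$ with a neighborhood $N_\epsilon$ of the zero section in $J^1(S^1)$, and that $S(\Lambda,\Lambda')$ is by definition the image of a suitably scaled copy of the pattern $\Lambda'\subset J^1(S^1)$ under this identification. The starting observation is that in $(\R_t\times\R^3,\omega)$ the cylinder $\R_t\times\Lambda'$ over a Legendrian $\Lambda'\subset J^1(S^1)$ is automatically Lagrangian, so the trivial cylinder over the pattern is the natural object to insert.

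First I would produce a symplectomorphism $\Phi$ from a neighborhood of $L$ in $(\R_t\times\R^3,\omega)$ to a neighborhood of the zero-section cylinder $Z$ in the symplectization $\R_t\times J^1(S^1)$ of the $1$-jet space. Both $L$ and $Z$ are exact Lagrangians diffeomorphic to $\R\times S^1$, so composing the two Weinstein Lagrangian neighborhood identifications supplies such a $\Phi$ up to the choice of behavior at the ends; the crucial refinement is to arrange that for $|t|\ge T$ the map $\Phi$ equals the $t$-invariant extension of the standard contactomorphism of the Legendrian neighborhood theorem applied to $\Lambda_-$ and to $\Lambda_+$. Because the contact neighborhoods of $\Lambda_-$ and $\Lambda_+$ are each modeled on the same $N_\epsilon\subset J^1(S^1)$, and $Z$ has the same zero-section model at both ends, these two end-normalizations are mutually consistent, and $\Phi$ can be required to be standard at both ends simultaneously.

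With $\Phi$ in hand I would set $\Sigma:=\Phi^{-1}(\R_t\times\Lambda')$, where $\Lambda'$ is scaled to lie inside $N_\epsilon$. Then $\Sigma$ is an embedded Lagrangian, since $\Phi$ is a symplectomorphism and $\R_t\times\Lambda'$ is Lagrangian, and it is diffeomorphic to a cylinder because the pattern $\Lambda'$ is connected. By the end-normalization of $\Phi$ it satisfies $\Sigma\cap((-\infty,-T]\times\R^3)=(-\infty,-T]\times S(\Lambda_-,\Lambda')$ and $\Sigma\cap([T,\infty)\times\R^3)=[T,\infty)\times S(\Lambda_+,\Lambda')$, so $\Sigma$ is a Lagrangian concordance from $S(\Lambda_-,\Lambda')$ to $S(\Lambda_+,\Lambda')$. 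I would not need to check exactness separately, since any Lagrangian concordance is automatically exact, as noted above; hence $S(\Lambda_-,\Lambda')\prec S(\Lambda_+,\Lambda')$.

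The hard part is the construction of $\Phi$ with prescribed standard behavior on the cylindrical ends: this is a relative version of the Weinstein neighborhood theorem, requiring a Moser-type deformation that is the identity, in the chosen model, for $|t|\ge T$ while interpolating across the compact middle of $L$, and one must keep the image inside the region where the contact-neighborhood model $N_\epsilon$ is valid so that the scaled pattern actually fits. Exactness of $L$ is what permits the neighborhood identification to be taken compatibly with the Liouville structure, and the fact that $L$ is a genuine cylinder is what allows the normal form to be globalized over $L$ without monodromy; a careful treatment of these two points is the technical heart of the argument.
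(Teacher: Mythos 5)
Your approach is the same as the paper's: apply the Weinstein neighborhood theorem to the concordance cylinder, push the trivial Lagrangian cylinder over the pattern $\Lambda'$ through the resulting symplectomorphism, and read off the two ends as the desired satellites. The one genuine gap is your assertion that the two end-normalizations of $\Phi$ are ``mutually consistent'' merely because both contact neighborhoods are modeled on the same $N_\epsilon \subset J^1(S^1)$. The satellite $S(\Lambda,\Lambda')$ depends on a choice of framing of the solid torus neighborhood (fixed here to be the contact, i.e.\ Thurston--Bennequin, framing), and the trivialization of the normal bundle of $L$ that $\Phi$ transports from the bottom end to the top end need not a priori agree with the contact framing of $\Lambda_+$: if it differed by $k$ twists, the top end of $\Sigma$ would be a satellite of $\Lambda_+$ with respect to the wrong framing, i.e.\ the satellite of a pattern $\Lambda''$ obtained from $\Lambda'$ by inserting $k$ full twists. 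The paper closes exactly this gap by invoking Chantraine's theorem that a Lagrangian concordance forces $\tb(\Lambda_-)=\tb(\Lambda_+)$, which identifies the framing induced at the top end with the $\tb$ framing of $\Lambda_+$; note that the paper only normalizes $\varphi$ at the bottom end and then \emph{identifies} the satellite appearing at the top, rather than trying to prescribe the map there. You should either cite the $\tb$ equality at this point or otherwise argue that the framing transported along the cylinder is the contact framing at both ends; without that input, the existence of a $\Phi$ that is simultaneously standard at both ends is unjustified.
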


\noindent Applying this result not just to $\Lambda$ but to a particular family of
satellites of $\Lambda$, we arrive at an infinite family of obstructions to $\Lambda \prec U$
which depend only on the underlying smooth knot type.

\begin{theorem}[see Theorem~\ref{thm:homfly-kauffman-obstruction}]
Let $\Lambda$ be a Legendrian knot of smooth knot type $K$, and let $K_n$
denote the 0-framed $n$-cable of $K$.  If $U \prec \Lambda \prec U$, and
$p_n(a,z)$ is either the HOMFLY-PT or Kauffman (Dubrovnik)
polynomial of
$K_n$, then $\maxdeg_a p_n(a,z) = n-1$ and $p_n(a,z)$ has $a^{n-1}$-coefficient
equal to $z^{1-n}$ for all $n \geq 1$.
\end{theorem}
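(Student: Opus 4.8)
The plan is to reduce the statement to a computation of normal rulings of a family of Legendrian satellites of $\Lambda$, and then to convert that computation into coefficients of $p_n$ by means of Rutherford's theorem relating normal rulings to the HOMFLY-PT and Kauffman (Dubrovnik) polynomials. First I would record that, since $U \prec \Lambda \prec U$ is realized by genus-$0$ cobordisms, Chantraine's formula $\tb(\Lambda_+)-\tb(\Lambda_-)=2g(\Lag)$ forces $\tb(\Lambda)=\tb(U)=-1$, so the contact framing on $\Lambda$ agrees with the $0$-framing up to the fixed shift I track below. For each $n\ge 1$ I would then fix the $0$-framed $n$-cable pattern $\Lambda'_n\subset J^1(S^1)$ (an $n$-component link pattern), chosen so that the underlying smooth link of $S(\Lambda,\Lambda'_n)$ is $K_n$ and so that the model satellite $S(U,\Lambda'_n)$ is the $n$-component unlink, with $\tb(S(U,\Lambda'_n))=-n$. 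The satellite construction and Theorem~\ref{thm:satellite} go through verbatim for such multi-component patterns.

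The engine is Theorem~\ref{thm:satellite}: applying it to both concordances $U\prec\Lambda$ and $\Lambda\prec U$ with the fixed pattern $\Lambda'_n$ yields $S(U,\Lambda'_n)\prec S(\Lambda,\Lambda'_n)\prec S(U,\Lambda'_n)$. In particular $\tb(S(\Lambda,\Lambda'_n))=\tb(S(U,\Lambda'_n))=-n$, since these are genus-$0$ cobordisms. I would next argue that $S(\Lambda,\Lambda'_n)$ carries exactly one normal ruling, with ruling polynomial equal to that of the unlink model. For the \emph{existence} of a ruling, the concordance $S(U,\Lambda'_n)\prec S(\Lambda,\Lambda'_n)$ exhibits $S(\Lambda,\Lambda'_n)$ as the top of an exact Lagrangian cobordism from the (fillable) cable of the standard unknot, hence $S(\Lambda,\Lambda'_n)$ is itself Lagrangian fillable; its DGA therefore admits an augmentation, and an augmentation produces a normal ruling. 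For the \emph{upper bound} I would use the reverse concordance $S(\Lambda,\Lambda'_n)\prec S(U,\Lambda'_n)$ together with the method underlying Theorem~\ref{thm:two-rulings}: distinct normal rulings of $S(\Lambda,\Lambda'_n)$ would, upon the further $2$-cabling that drives that proof (composed with $\Lambda'_n$ into a single pattern), produce augmentations of the doubled satellite incompatible with the concordance down to the corresponding simple satellite of $U$. Since the unlink model $S(U,\Lambda'_n)$ has a single, explicitly computable ruling, this pins the ruling polynomial $R_{S(\Lambda,\Lambda'_n)}(z)$ to the one monomial dictated by the model.

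With uniqueness in hand, Rutherford's theorem finishes the argument. For a Legendrian front admitting a normal ruling, the coefficient of $a^{-\tb-1}$ in the HOMFLY-PT polynomial (respectively the Dubrovnik polynomial, for ungraded rulings) is the ruling polynomial, and $-\tb-1$ is the top $a$-degree. Since $\tb(S(\Lambda,\Lambda'_n))=-n$, this gives $\maxdeg_a p_n=n-1$; and the unlink model has extreme $a$-part $a^{n-1}z^{1-n}$, so the computed ruling polynomial forces the $a^{n-1}$-coefficient of $p_n$ to be $z^{1-n}$. The $n=1$ case asserts $\maxdeg_a p_1=0$ with $a^0$-coefficient $z^0=1$, consistent with $\Lambda$ being a maximal-$\tb$ representative carrying a single ruling. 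I would run the HOMFLY-PT and Kauffman cases in parallel, using graded rulings for the former and ungraded rulings (via the Dubrovnik normalization) for the latter.

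The main obstacle is the upper bound in the second paragraph: showing that the $n$-cable satellite of $\Lambda$ has no ruling beyond the one forced by fillability. This is exactly where the genuine input of Theorem~\ref{thm:two-rulings} and the functoriality of Legendrian contact homology under exact cobordisms are needed, because a naive count of augmentations is preserved only up to the non-invertible cobordism maps induced in each direction, and so two-way concordance alone does not equate the invariants. A secondary but unavoidable technical point is the bookkeeping that identifies the underlying smooth type of $S(\Lambda,\Lambda'_n)$ with $K_n$ and converts the satellite's ruling polynomial into the stated coefficients of $p_n$; this requires fixing the framing shift coming from $\tb(\Lambda)=-1$ so that the extreme $a$-degree lands exactly at $n-1$.
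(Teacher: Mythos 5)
Your overall strategy (compute the ruling polynomial of a Legendrian representative of $K_n$ and read off the HOMFLY-PT/Kauffman coefficients via Rutherford's theorem, using $\tb = -n$ to locate the top $a$-degree at $n-1$) is the right one, and you have correctly located the hard step: showing that the $n$-cable satellite admits no ruling beyond the one forced by fillability. But your proposed resolution of that step does not work. Theorem~\ref{thm:two-rulings} (via Theorem~\ref{thm:a2-compatible}) obstructs concordance to the \emph{standard unknot} $U$; its proof hinges on the fact that $S(U,\Delta_2)$ is a stabilized unknot and hence has no augmentation. Your cable link $S(\Lambda,\Lambda'_n)$ is concordant not to $U$ but to the $n$-component Legendrian unlink $S(U,\Lambda'_n)$, so to run your argument you would need the analogous non-existence statement for a suitable $2$-cable of that unlink, together with extensions of Theorem~\ref{thm:satellite}, Corollary~\ref{cor:ruling-polynomial-uku}, and the two-rulings construction to multi-component patterns and link concordances. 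None of this is supplied: the ``composed pattern'' parenthetical does not specify which component gets doubled, why two distinct rulings of the link yield a ruling of the doubled satellite, or why the corresponding satellite of the unlink model has no augmentation. As it stands the uniqueness claim is unproven.

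The paper closes exactly this gap with a different device: instead of the $n$-component pattern $tw_n$ it uses the connected pattern $P_n$, obtained from $tw_n$ by appending a cascade of $n-1$ clasps, so that $S(\Lambda,P_n)$ is a \emph{knot} and (Lemma~\ref{lem:pn-fixes-unknot}) $S(U,P_n)$ is Legendrian isotopic to $U$ itself. Then $U\prec S(\Lambda,P_n)\prec U$, and Corollary~\ref{cor:ruling-polynomial-uku} applies verbatim to give $R^d_{S(\Lambda,P_n)}(z)=1$; Theorem~\ref{thm:rulings-tw-vs-p} then transfers this back to $R^d_{S(\Lambda,tw_n)}(z)=z^{1-n}$ via an explicit bijection of rulings (each clasp forces three switches and contributes two right cusps, hence a factor of $z$). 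If you want to salvage your outline, the clasped pattern $P_n$ is the missing idea; the remainder of your reduction to Rutherford's theorem matches the paper.
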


\noindent
One can in particular use this result to obstruct $\Lambda\prec U$
when $U\prec\Lambda$, by
finding two distinct rulings of a Legendrian representative
$S(\Lambda,tw_n)$ of $K_n$; one ruling is guaranteed by the existence
of a ruling on $\Lambda$, but the second violates the conclusion of
Theorem~\ref{thm:homfly-kauffman-obstruction}. See
Theorem~\ref{thm:twn-ruling-polynomials} for more details and
Section~\ref{ssec:nonrev} for applications of this technique.

In a different direction, a result of Ekholm, Honda, and K\'alm\'an
\cite{ehk} gives 
explicit exact Lagrangian cobordisms between Legendrian knots whose
fronts are related by two elementary moves, unknot filling and pinch
moves, which correspond topologically to $0$-handle and $1$-handle
attachment. We call a Lagrangian cobordism \textit{decomposable} if it
can be broken into these elementary pieces; decomposable cobordisms
currently form a central tool for constructing exact Lagrangian
cobordisms.

Although we do not answer the general question of whether
a nontrivial Legendrian knot $\Lambda$ can satisfy $U\prec \Lambda\prec U$, we prove a special case of this in
Section~\ref{sec:unknots}: no nontrivial $\Lambda$ can have a
decomposable Lagrangian concordance to $U$ (see Theorem~\ref{thm:unknot}). The proof of this is
purely topological, relying on work of Kronheimer and Mrowka
\cite{km-excision}.  We do however expect that indecomposable Lagrangian
concordances exist, and we exhibit a possible example in
Conjecture~\ref{conj:indecomposable}, but we do not know of any
potential examples from $U$ to another knot.

Finally, in Section~\ref{sec:census}, we enumerate all knots up
through $14$ crossings with Legendrian representatives that are
Lagrangian slice. Necessary conditions for a knot $K$ to have such a
Legendrian representative are that $K$ must be smoothly slice and
satisfy $\maxtb(K) = -1$, where $\maxtb$ is maximal
Thurston--Bennequin number. Through $14$ crossings, we show that these
conditions are sufficient as well, using explicit decomposable
cobordisms for each knot type. To
help with the census of Lagrangian slice knots, we prove that no
nontrivial alternating knot can be Lagrangian slice
(see Theorem~\ref{thm:alternating-positive}), which has the
side benefit of giving a new, contact-geometric proof of a result of
Nakamura \cite{nakamura} that any reduced alternating diagram of a
positive knot can only have positive crossings.

\subsection*{Acknowledgments}
CC was supported by a CIRGET postdoctoral fellowship. 
LN thanks Tobias Ekholm for many useful conversations. LN was
supported by NSF grants DMS-0846346 and DMS-1406371. SS was supported
by NSF postdoctoral fellowship DMS-1204387. 
%!TEX root = concordances.tex

\section{Legendrian Satellites and Concordance}
\label{sec:satellites}

In this section, we present new obstructions to the existence of a
Lagrangian concordance between two Legendrian knots. We assume basic
familiarity with the theory of Legendrian knots, along the lines of
\cite{etnyre-survey}. Throughout this section (and indeed, for the rest of the paper), we use $U$ to denote the standard Legendrian unknot with $\tb=-1$.

\subsection{Review of functoriality of Legendrian contact homology under cobordisms}

Associated to any Legendrian knot in $\R^3$ is the Chekanov--Eliashberg differential graded algebra (DGA) \cite{chekanov,eliashberg-icm}, whose homology is the Legendrian contact homology of the knot. As part of the Symplectic Field Theory package \cite{egh}, this DGA behaves functorially under exact Lagrangian cobordism. Since this behavior underlies our study of obstructions, we briefly review the statement here, as proved by Ekholm, Honda, and K\'alm\'an \cite{ehk}.

\begin{proposition}[\cite{ehk}]
\label{prop:ehk}
If $\Lambda_-,\Lambda_+$ are Legendrian knots such that there is an exact Lagrangian cobordism from $\Lambda_-$ to $\Lambda_+$, then there is a morphism of Legendrian contact homology DGAs
\[
\mathcal{A}_{\Lambda_+} \to \mathcal{A}_{\Lambda_-}.
\]
\end{proposition}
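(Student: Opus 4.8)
The plan is to construct the morphism $\Phi \colon \mathcal{A}_{\Lambda_+} \to \mathcal{A}_{\Lambda_-}$ by counting rigid punctured holomorphic disks in the symplectization $\R^4 = \R_t \times \R^3$ with boundary on the cobordism $L$. First I would fix an almost complex structure $J$ on $\R^4$ that is compatible with $\omega$ (so that $L$ is totally real) and that is cylindrical, i.e.\ translation-invariant, outside a compact slab in the $t$-direction, chosen to agree near the two ends with the symplectization-invariant structures used to define the differentials $\partial_\pm$ on $\mathcal{A}_{\Lambda_\pm}$, and generic among such structures. For each Reeb chord $a$ of $\Lambda_+$ and each ordered word $b_1 \cdots b_k$ of Reeb chords of $\Lambda_-$, I would let $\mathcal{M}_L(a; b_1, \dots, b_k)$ be the moduli space of $J$-holomorphic disks with boundary on $L$, one positive puncture asymptotic to $a$ at the top end, and negative punctures asymptotic to $b_1, \dots, b_k$ in cyclic order along the boundary at the bottom end. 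An index computation gives the expected dimension of this space, and I would define $\Phi$ on the generator $a$ as the signed count of its rigid (zero-dimensional) components,
\[
\Phi(a) = \sum \# \mathcal{M}_L(a; b_1, \dots, b_k)\, b_1 \cdots b_k,
\]
and extend $\Phi$ to all of $\mathcal{A}_{\Lambda_+}$ as a unital algebra homomorphism. The key contrast with the differential is that the latter counts disks in a genuinely translation-invariant target and is thus defined modulo $\R$-translation, whereas here $L$ breaks the $\R$-symmetry, so one counts honestly rigid disks; a parallel index bookkeeping shows these disks have the degree needed for $\Phi$ to preserve the grading.

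The second and central step is to verify the chain-map identity $\Phi \circ \partial_+ = \partial_- \circ \Phi$ by analyzing the compactified one-dimensional moduli spaces $\overline{\mathcal{M}}_L(a; b_1, \dots, b_k)$. Invoking the SFT compactness theorem, I would identify the boundary of such a compact $1$-manifold with its broken configurations, which come in exactly two types. In the first, the building has a rigid disk in the symplectization of $\Lambda_+$ with positive end $a$, contributing a term of $\partial_+ a$, sitting above rigid cobordism disks attached along its negative ends; summing over such ends recovers the $\mathbf{b}$-coefficient of $\Phi(\partial_+ a)$. In the second, the building has a single rigid cobordism disk with positive end $a$ sitting above a rigid disk in the symplectization of $\Lambda_-$ glued along one of the cobordism disk's negative ends, contributing a term of $\partial_-$; these recover the $\mathbf{b}$-coefficient of $\partial_-(\Phi a)$. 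A gluing theorem guaranteeing that each such two-level configuration arises from precisely one end of the moduli space, together with the vanishing of the total boundary count of a compact oriented $1$-manifold, then yields the morphism relation (over $\zzt$ this reads $\Phi \partial_+ = \partial_- \Phi$, and once coherent orientations are fixed it gives the signed DGA identity over $\zz$).

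The main obstacle I expect is analytic rather than formal. Because we work in the symplectization of a contact $3$-manifold, one must simultaneously arrange transversality for the interior and for every boundary stratum of the relevant zero- and one-dimensional moduli spaces using a single generic $J$, and one must exclude or control degenerate limits such as multiply covered components, disks bubbling off closed holomorphic spheres, and boundary bubbling. Establishing this transversality alongside the precise compactness and gluing statements in this low-dimensional setting is exactly the delicate package developed by Ekholm, Honda, and K\'alm\'an for Legendrian knots in $\R^3$ and their exact Lagrangian cobordisms, and I would cite their transversality, compactness, and gluing results to complete the argument.
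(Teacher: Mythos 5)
The paper offers no proof of this proposition---it is quoted directly from Ekholm--Honda--K\'alm\'an---and your sketch is an accurate outline of the argument given in that reference: define $\Phi$ on generators by counting rigid $J$-holomorphic disks with boundary on the cobordism, one positive puncture at the $\Lambda_+$ end and negative punctures at the $\Lambda_-$ end, and verify $\Phi\circ\partial_+=\partial_-\circ\Phi$ by identifying the two types of broken configurations in the boundary of the compactified one-dimensional moduli spaces, deferring the transversality, compactness, and gluing package to the cited work. The only imprecision is your aside that $\Phi$ preserves the grading: as the paper's remark following the proposition points out, the index formula only controls degrees modulo the Maslov number of $L$, so $\Phi$ need not preserve the full $\zz$-grading even when both rotation numbers vanish---though this does not affect the proposition as stated.
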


For our purposes, it will be convenient to clarify the statement of Proposition~\ref{prop:ehk} in two ways, which we present as the following two remarks.

\begin{remark}[Coefficient ring]
The morphism in
Proposition~\ref{prop:ehk} restricts to the identity map on the
coefficient ring of the DGA, which in \cite{ehk} is $\zt =
\zz/2\zz$. In fact, Proposition~\ref{prop:ehk} can be extended to give
a morphism of DGAs over $\zz[t,t^{-1}]$, where the coefficients are
lifted to $\zz[t,t^{-1}]$ as in \cite{etnyre-ng-sabloff}. As noted in
\cite{ehk}, a proof over $\zz$ (or $\zz[t,t^{-1}]$) would entail a
consideration of orientations of moduli spaces. However, working mod
$2$ and lifting Proposition~\ref{prop:ehk} to DGAs over
$\zt[t,t^{-1}]$ simply entails choosing base points on both ends of
the concordance, joining the base points by a path on the concordance
cylinder, and keeping track of intersections of boundaries of
holomorphic disks with this path. We omit the details of the proof
here.
\end{remark}

\begin{remark}[Grading]
The extent to which the morphism $\phi$ in Proposition~\ref{prop:ehk}
preserves the grading in the DGAs depends on the Maslov index of the
Lagrangian cobordism $\Lag$, defined to be the $\gcd$ of the Maslov indices of
all closed curves in $\Lag$, including $\Lambda_-$ and $\Lambda_+$. If $\Lag$ is
oriented, then $\phi$ preserves grading mod $2$; if $\Lag$ is
unoriented, then $\phi$ may not preserve the grading at
all. There is no reason in general that $\phi$ needs to preserve
the full $\zz$ grading, even if $\Lambda_-$ and $\Lambda_+$ have rotation number
$0$. However, we will be particularly interested below in the special case where $\Lambda_-$ is the standard
Legendrian unknot $U$. In this case,
since $U$ can be filled in with a Lagrangian disk $D$ and curves in
$\Lag$ are null-homotopic in $\Lag \cup D$, $\phi$ does preserve the full
$\zz$ grading.
\end{remark}

\subsection{Solid torus knots and concordance}
\label{ssec:solid-torus-knots}

Our obstructions rely on considering satellites of Legendrian knots.
We begin by reviewing Legendrian solid torus knots and the Legendrian satellite construction from %Ng--Traynor
\cite{ng-traynor}.  We identify the open solid torus $S^1 \times \R^2$ as the 1-jet space of the circle, i.e.\ as $J^1(S^1) \cong T^*S^1 \times \R$, which equips it with a natural contact form $\alpha = dz - y\,dx$; here $x$ and $y$ are the base and fiber coordinates on $T^*S^1$ and $z$ is the $\R$-coordinate.  Just as in the case of $\R^3 \cong J^1(\R)$, we can recover a Legendrian knot from its front projection onto $S^1_x \times \R_z$, which in practice is drawn by representing $S^1$ as an interval and identifying its endpoints.

Given a Legendrian companion knot $\Lambda \subset \R^3$ and a Legendrian pattern knot $\Lambda'\subset J^1(S^1)$, the contact neighborhood theorem says that $\Lambda$ has a standard neighborhood $N(\Lambda)$ for which there is a contactomorphism $\varphi: J^1(S^1)\isomto N(\Lambda)$, and we define the \emph{Legendrian satellite} $S(\Lambda,\Lambda') \subset \R^3$ to be the image $\varphi(\Lambda')$.  We remark that this requires a choice of framing for $N(\Lambda)$, which we fix to be the contact (Thurston--Bennequin) framing.

We can produce a front projection of $S(\Lambda,\Lambda')$ as follows.  If the front projection of $L$ intersects the ends of the $S^1$ interval in $n$ points, then we produce a front for the \emph{$n$-copy} of $\Lambda$ by taking $n$ copies of the front for $\Lambda$ and shifting each one a very small distance in the $z$-direction.  (Topologically, the $n$-copy is the $(n,n\cdot \tb(\Lambda))$-cable of $\Lambda$, in which the first coordinate denotes the longitudinal winding.)  We then take a point where $\Lambda$ is oriented from left to right, cut the front open along the $n$-copy of that point, and insert the front diagram for $\Lambda'$.  See Figure~\ref{fig:satellite-example} for an example satellite in which the companion $\Lambda$ is a right handed trefoil with $(\tb,r)=(1,0)$ and the pattern $\Delta_2$ is a positive half twist on two strands, whose name is borrowed from \cite{ng-rutherford}.  Topologically, $S(\Lambda,\Delta_2)$ is the $(2,3)$-cable of the right handed trefoil.

\begin{figure}[t]
\labellist
\huge
\pinlabel $\leadsto$ at 145 44
\endlabellist
\begin{centering}
\includegraphics{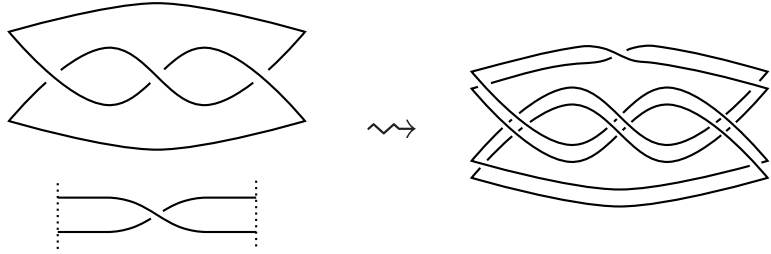}
\end{centering}
\caption{Using the companion $\Lambda$ and pattern $\Delta_2$ to produce the satellite $S(\Lambda,\Delta_2)$.}
\label{fig:satellite-example}
\end{figure}

Our main theorem says that Lagrangian concordance is preserved by Legendrian satellite operations.  This is an analogue of a well-known fact in classical knot concordance, and indeed the proofs are nearly identical once we observe that Lagrangian cylinders have standard neighborhoods.

\begin{theorem}
\label{thm:satellite}
Let $\Lambda'$ be a Legendrian knot in $J^1(S^1)$.  If $\Lambda_-$ and $\Lambda_+$ are Legendrian knots in $\R^3$ such that $\Lambda_- \prec \Lambda_+$, then $S(\Lambda_-,\Lambda') \prec S(\Lambda_+,\Lambda')$.
\end{theorem}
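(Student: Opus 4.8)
The plan is to mimic the classical proof that satellite operations preserve smooth knot concordance, in which one takes a tubular neighborhood of the concordance annulus that restricts to the chosen framed tubular neighborhoods at the two ends, and then inserts the trace of the pattern. The contact-geometric replacement for the tubular neighborhood is a \emph{standard symplectic neighborhood} of the Lagrangian cylinder $L$ realizing $\Lambda_- \prec \Lambda_+$, modeled on the symplectization of $J^1(S^1)$, and the replacement for the trace of the pattern is the trivial Lagrangian cylinder $\R_t \times \Lambda'$.

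Concretely, the key step I would isolate is the following neighborhood lemma. Writing $\R_t \times J^1(S^1)$ for the symplectization of $(J^1(S^1), dz - y\,dx)$ with symplectic form $d(e^t\alpha)$, I claim there is a symplectomorphism $\Psi$ from a neighborhood of the zero-section cylinder $\R_t \times (S^1\times\{0\})$ onto a neighborhood $N(L)$ of $L$ in $\R^4$, carrying the zero-section cylinder to $L$ and satisfying $\Psi = \mathrm{id}_{\R_t}\times \varphi_\pm$ for $\pm t$ sufficiently large, where $\varphi_\pm \colon J^1(S^1) \isomto N(\Lambda_\pm)$ is the standard contact neighborhood used to define the satellites $S(\Lambda_\pm,\Lambda')$.

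Granting this lemma, the theorem follows quickly. After rescaling $\Lambda'$ by a contactomorphism of $J^1(S^1)$ so that it lies in the domain of $\Psi$ (which alters $S(\Lambda_\pm,\Lambda')$ only by Legendrian isotopy), I would set $\widetilde{L} := \Psi(\R_t \times \Lambda')$. Since $\Lambda'$ is Legendrian, $\R_t\times\Lambda'$ is Lagrangian in the symplectization, so $\widetilde{L}$ is Lagrangian in $\R^4$; it is diffeomorphic to $\R\times S^1$, hence a cylinder; and for $\pm t$ large it equals $\R_t \times \varphi_\pm(\Lambda') = \R_t \times S(\Lambda_\pm,\Lambda')$. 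Thus $\widetilde{L}$ is a Lagrangian concordance from $S(\Lambda_-,\Lambda')$ to $S(\Lambda_+,\Lambda')$, automatically exact as noted after the definition of $\prec$, giving $S(\Lambda_-,\Lambda')\prec S(\Lambda_+,\Lambda')$.

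It remains to prove the neighborhood lemma, which I expect to be the main obstacle. Weinstein's Lagrangian neighborhood theorem identifies a neighborhood of $L$ in $\R^4$ with a neighborhood of the zero section in $T^*L \cong T^*(\R\times S^1)$, and a direct computation identifies the model $\R_t \times J^1(S^1)$ near its zero-section cylinder with the same space $T^*(\R\times S^1)$. The genuinely nontrivial point is to choose these identifications compatibly with the cylindrical ends: at each end $L = \R_t\times\Lambda_\pm$ and the ambient form is literally the symplectization of $(\R^3,\alpha)$, so $\mathrm{id}_{\R_t}\times\varphi_\pm$ already furnishes the desired symplectomorphism there, and in particular the symplectic normal framing of $L$ restricts at each end to the contact (Thurston--Bennequin) framing of $\Lambda_\pm$ used in the satellite construction. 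I would therefore prove the lemma in a relative form: take $\mathrm{id}_{\R_t}\times\varphi_\pm$ as given data on the ends $\{\pm t \ge T\}$ and extend over the compact middle by a Moser-type argument, using that two symplectic identifications agreeing to first order along $L$ differ by a symplectomorphism isotopic to the identity rel $L$. Running this interpolation while keeping the ends fixed — the relative Weinstein/Moser step — is the crux; the rest is formal.
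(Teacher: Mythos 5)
Your proposal follows essentially the same route as the paper: take a standard (Weinstein) symplectic neighborhood of the concordance cylinder $\Lag$ modeled on the symplectization of $J^1(S^1)$ near its zero-section cylinder, push $\R_t\times\Lambda'$ (shrunk by a contactomorphism of $J^1(S^1)$ so it fits in the neighborhood) through this identification, and glue on product ends. The one step you state but do not justify is the assertion that the symplectic normal framing of $\Lag$ restricts \emph{at both ends simultaneously} to the Thurston--Bennequin framings of $\Lambda_\pm$: the identifications $\mathrm{id}_{\R_t}\times\varphi_\pm$ give the TB framing at each end separately, but whether a single global identification of $N(\Lag)$ with the model can agree with both is obstructed by the difference between the two end framings measured against a global trivialization of $\nu\Lag\cong T^*\Lag$. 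This obstruction vanishes precisely because $\tb(\Lambda_-)=\tb(\Lambda_+)$ for a Lagrangian concordance (Chantraine), and your relative Weinstein/Moser interpolation cannot be ``formal'' without this input. The paper sidesteps the issue by fixing the TB framing only at the bottom end, observing that the image of $\{T\}\times\Lambda'$ is then \emph{some} satellite of $\Lambda_+$ with a framing determined by the cylinder, and identifying that framing as the TB framing via the same $\tb$ equality; either way the citation to \cite{chantraine-concordance} is needed, so you should add it. With that addition your argument is correct and matches the paper's.
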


\begin{proof}
Let $\Lag$ be a Lagrangian concordance from $\Lambda_-$ to $\Lambda_+$ in the symplectization 
$\R \times \R^3$, and restrict to some 
$[-T,T] \times \R^3$ such that $\Lag$ is a product cylinder outside this region.  If $\Lambda_-' \subset J^1(S^1)$ is the core of the solid torus given by $y=z=0$, then $\Lambda_-'$ is Legendrian and so the cylinder 
$[-T,T] \times \Lambda_-' \subset [-T,T] \times J^1(S^1)$
is also Lagrangian.  The Weinstein 
neighborhood theorem thus provides a symplectomorphism 
$\varphi: N([-T,T]\times \Lambda_-') \isomto N(\Lag)$ between neighborhoods of the two Lagrangian cylinders, and since $[-T,T]$ is compact we can isotop $\Lambda'$ close enough to $\Lambda_-'$ to ensure that the Lagrangian 
$[-T,T]\times \Lambda'$ lies inside $N([-T,T]\times \Lambda_-')$.

Choosing the Thurston--Bennequin framing on $\Lambda_-$ for the neighborhood of $\{-T\}\times \Lambda_-$ in $\{-T\}\times\R^3$, it follows that $\varphi(\{-T\}\times \Lambda')$ is the Legendrian satellite $S(\Lambda_0,\Lambda')$ and that $\varphi([-T,T]\times \Lambda')$ is a Lagrangian cylinder.  Its restriction to $\{T\}\times\R^3$ is a Legendrian satellite of $\Lambda_1$, and since $\Lambda_- \prec \Lambda_+$ implies that $\tb(\Lambda_-)=\tb(\Lambda_+)$ by \cite{chantraine-concordance}, this satellite is also $\tb$-framed, hence it is $S(\Lambda_1,\Lambda')$.  We conclude after gluing on cylindrical ends that $\varphi([-T,T]\times \Lambda')$ is the desired concordance.
\end{proof}

\subsection{$A_{(2)}$-compatibility and the Legendrian unknot}
\label{ssec:a2-compatibility}

In this section, we examine a particular obstruction to the existence
of Lagrangian concordances.
Following \cite{ng-rutherford}, we say that a Legendrian knot $\Lambda$ is \emph{$A_{(2)}$-compatible} if the satellite $S(\Lambda,\Delta_2)$ admits a normal ruling, or equivalently if it admits an augmentation, i.e.\ a DGA morphism $\mathcal{A}_{S(\Lambda,\Delta_2)} \to \zt= \zz/2\zz$  \cite{fuchs, fuchs-ishkhanov, sabloff-ruling}.  The standard Legendrian unknot $U$
is not $A_{(2)}$-compatible, since the satellite $S(U,\Delta_2)$ is a topological unknot with $\tb=-3$ and hence a stabilization.

\begin{theorem}
\label{thm:a2-compatible}
Let $\Lambda$ be a Legendrian knot.  If $\Lambda \prec U$ then $\Lambda$ is not $A_{(2)}$-compatible.
\end{theorem}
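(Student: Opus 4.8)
The plan is to combine the satellite functoriality of Theorem~\ref{thm:satellite} with the augmentation obstruction coming from Proposition~\ref{prop:ehk}, using the fact that $U$ is not $A_{(2)}$-compatible.

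First I would suppose for contradiction that $\Lambda \prec U$ and that $\Lambda$ is $A_{(2)}$-compatible. Applying Theorem~\ref{thm:satellite} with the pattern $\Lambda' = \Delta_2$, the concordance $\Lambda \prec U$ yields a Lagrangian concordance $S(\Lambda,\Delta_2) \prec S(U,\Delta_2)$. The key idea is that this concordance runs in the \emph{wrong} direction to be filled directly, so instead I would use it together with a filling of $S(\Lambda,\Delta_2)$. By hypothesis $\Lambda$ is $A_{(2)}$-compatible, so $S(\Lambda,\Delta_2)$ admits an augmentation, equivalently a normal ruling. The goal is to transport this augmentation across the concordance to produce an augmentation of $S(U,\Delta_2)$, contradicting the fact (noted just before the statement) that $U$ is not $A_{(2)}$-compatible because $S(U,\Delta_2)$ is a stabilized unknot.

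The central step is the transport of the augmentation. By Proposition~\ref{prop:ehk}, the exact Lagrangian concordance from $S(\Lambda,\Delta_2)$ to $S(U,\Delta_2)$ induces a DGA morphism
\[
\mathcal{A}_{S(U,\Delta_2)} \to \mathcal{A}_{S(\Lambda,\Delta_2)}.
\]
Composing this morphism with the augmentation $\epsilon: \mathcal{A}_{S(\Lambda,\Delta_2)} \to \zt$ furnished by $A_{(2)}$-compatibility of $\Lambda$ gives a DGA morphism $\mathcal{A}_{S(U,\Delta_2)} \to \zt$, i.e.\ an augmentation of $S(U,\Delta_2)$. Here I would invoke the Coefficient ring remark to ensure the induced morphism restricts to the identity (over $\zt$) on coefficients, so that the composite is genuinely an augmentation rather than some twisted variant. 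This augmentation witnesses $A_{(2)}$-compatibility of $U$, which is the desired contradiction.

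I expect the main subtlety to be checking that the composite is a legitimate augmentation in the precise sense demanded by the definition of $A_{(2)}$-compatibility. One must confirm that a DGA morphism into $\zt$ arising as $\epsilon \circ \phi$ satisfies the grading and unitality conditions built into the notion of augmentation; the Grading remark addresses the grading concern, since what matters for the existence of an augmentation is compatibility mod $2$, which is guaranteed as $L$ is oriented. A cleaner way to package the argument, avoiding grading bookkeeping entirely, is to phrase everything in terms of normal rulings: $A_{(2)}$-compatibility of $\Lambda$ produces a normal ruling of $S(\Lambda,\Delta_2)$, and the equivalence of rulings with augmentations (cited before the statement) lets one pass freely between the two pictures. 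The remaining work is then purely formal—verifying that the composition of a cobordism map with an augmentation is again an augmentation—so no hard geometric input beyond Theorem~\ref{thm:satellite} and Proposition~\ref{prop:ehk} is needed.
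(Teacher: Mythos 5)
Your proposal is correct and follows exactly the paper's argument: apply Theorem~\ref{thm:satellite} with pattern $\Delta_2$ to get $S(\Lambda,\Delta_2) \prec S(U,\Delta_2)$, use Proposition~\ref{prop:ehk} to obtain the DGA morphism $\mathcal{A}_{S(U,\Delta_2)} \to \mathcal{A}_{S(\Lambda,\Delta_2)}$, and compose with the augmentation supplied by $A_{(2)}$-compatibility to contradict the non-existence of an augmentation for the stabilized unknot $S(U,\Delta_2)$. The extra worries you raise about grading and unitality are harmless but unnecessary here, since the augmentations in the definition of $A_{(2)}$-compatibility are just DGA morphisms to $\zt$ and the composite is automatically one.
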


\begin{proof}
If $\Lambda \prec U$ then Theorem~\ref{thm:satellite} says that $S(\Lambda,\Delta_2) \prec S(U,\Delta_2)$, and then Proposition~\ref{prop:ehk} provides a morphism $\mathcal{A}_{S(U,\Delta_2)} \to \mathcal{A}_{S(\Lambda,\Delta_2)}$ between the Legendrian contact homology DGAs of the two satellites over $\zt = \zz/2\zz$. If $\Lambda$ were $A_{(2)}$-compatible then composing this morphism with an augmentation $\mathcal{A}_{S(\Lambda,\Delta_2)} \to \zt$ would give an augmentation of $S(U,\Delta_2)$, which does not exist.
\end{proof}

\begin{remark}
\label{rmk:946}
We know from \cite{ng-rutherford} that $\Lambda$ is $A_{(2)}$-compatible if and only if the DGA $\mathcal{A}_\Lambda$ defined over $\zt[t,t^{-1}]$ has a $2$-dimensional representation sending $t$ to $\left(\begin{smallmatrix} p & 1 \\ 1 & 0\end{smallmatrix}\right)$ for some $p$.  Since $\mathcal{A}_U$ has no such representations, Theorem~\ref{thm:a2-compatible}
also follows from the extension of Proposition~\ref{prop:ehk} to
$\zt[t,t^{-1}]$ coefficients. This can be reworked to give an
obstruction to concordance to $U$ that does not explicitly mention
$A_{(2)}$-compatibility. For example, for the $\overline{9_{46}}$ knot
$\Lambda$ considered by Chantraine \cite{chantraine-symmetric} (see
Example~\ref{ex:ruling-2copy-946} below), $\Lambda \not\prec U$ since it can
be checked that the DGA for $\Lambda$ over $\zt[t,t^{-1}]$ has a
$2$-dimensional representation sending $t$ to
$\left(\begin{smallmatrix} 0 & 1 \\ 1 & 0\end{smallmatrix}\right)$
(and in fact another sending $t$ to
$\left(\begin{smallmatrix} 1 & 1 \\ 1 & 0\end{smallmatrix}\right)$), while the DGA for $U$ does not.
\end{remark}

To apply Theorem~\ref{thm:a2-compatible}, it is convenient to have sufficient conditions for $A_{(2)}$-compatibility.
The second author and Rutherford \cite[Theorem~5.4]{ng-rutherford} showed that if the ruling polynomial $R_\Lambda(z)$ has positive degree then $\Lambda$ is $A_{(2)}$-compatible.\footnote{The convention in \cite{ng-rutherford} is that a ruling $\rho$ with $s$ switches and $c$ right cusps contributes $z^{s-c}$ to the ruling polynomial, so their condition is $\deg R_\Lambda(z) \geq 0$.  We instead use the convention $z^{s-c+1}$ of \cite{rutherford}, so that the 2-graded and ungraded ruling polynomials match the appropriate coefficients of the HOMFLY-PT and Kauffman polynomials respectively.}
The following result is similar but allows for $R_\Lambda(z)$ to be a constant as well.

\begin{theorem}
\label{thm:two-rulings}
If $\Lambda$ has at least two normal rulings, then $\Lambda$ is $A_{(2)}$-compatible, and thus $\Lambda \not\prec U$.
\end{theorem}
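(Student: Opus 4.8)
The plan is to prove the two assertions in turn, the second of which is automatic: once $\Lambda$ is shown to be $A_{(2)}$-compatible, Theorem~\ref{thm:a2-compatible} gives $\Lambda \not\prec U$ immediately (its contrapositive is exactly the implication we want). So the real task is to produce, from two distinct normal rulings of $\Lambda$, a normal ruling of the satellite $S(\Lambda,\Delta_2)$, since by definition this is precisely what $A_{(2)}$-compatibility demands. I would begin by recalling the front of $S(\Lambda,\Delta_2)$: it consists of two parallel Legendrian copies of the front of $\Lambda$, joined at the basepoint by the single crossing of the half-twist $\Delta_2$. This half-twist interchanges the two copies, so that $S(\Lambda,\Delta_2)$ is a connected knot whose front traverses that of $\Lambda$ twice.

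The construction I would attempt is to place the first ruling $\rho_1$ on one copy and the second ruling $\rho_2$ on the other. Away from the basepoint this yields a candidate ruling of the $2$-copy whose switches are the doubled switches of $\rho_1$ and $\rho_2$; the extra crossings produced by doubling each crossing of $\Lambda$ can be taken to be non-switches, where the normality condition is vacuous. Everything then comes down to the seam, where the half-twist crossing sits and where the pairing coming from $\rho_1$ on one copy must be reconciled with the pairing coming from $\rho_2$ on the other. I would show that this reconciliation can be carried out using the crossing of $\Delta_2$, and that the resulting structure is a genuine, normal ruling of $S(\Lambda,\Delta_2)$.

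The heart of the argument, and the step I expect to be the main obstacle, is exactly this local analysis at the half-twist. The point is that two copies of a \emph{single} ruling cannot be glued across it: this is precisely the failure exhibited by the standard unknot, where the unique ruling forces the half-twist to close up into a stabilization and no ruling of $S(U,\Delta_2)$ exists. The hypothesis that $\Lambda$ carries two \emph{distinct} rulings is what supplies the flexibility needed to resolve the crossing normally, and making this precise---locating where $\rho_1$ and $\rho_2$ differ, using that difference to license a legal switch at the seam, and checking that every ruling disk still closes up correctly at the right cusps---is the crux of the proof.

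As an alternative route I note that, by Remark~\ref{rmk:946}, it would suffice to construct a $2$-dimensional representation of $\mathcal{A}_\Lambda$ over $\zt[t,t^{-1}]$ sending $t$ to $\left(\begin{smallmatrix} p & 1 \\ 1 & 0\end{smallmatrix}\right)$. Here one would take the two augmentations $\mathcal{A}_\Lambda \to \zt$ determined by the two rulings as the diagonal entries of an upper-triangular representation and solve for an off-diagonal term realizing $t \mapsto \left(\begin{smallmatrix} 1 & 1 \\ 0 & 1\end{smallmatrix}\right)$, which is conjugate over $\zt$ to the required matrix with $p=0$. The obstacle there is the same in spirit: the off-diagonal term must satisfy a linearized cocycle condition taking the prescribed value on $t$, and producing such a term is exactly what the distinctness of the two augmentations must guarantee.
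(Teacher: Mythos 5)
Your reduction is the same as the paper's: the second clause is just the contrapositive of Theorem~\ref{thm:a2-compatible}, so everything rests on producing a normal ruling of $S(\Lambda,\Delta_2)$ from two distinct rulings $\rho_1,\rho_2$ of $\Lambda$. That construction, however, is exactly the step you leave undone (``making this precise \dots\ is the crux of the proof''), and the plan you sketch---insert $\Delta_2$ at a fixed basepoint, put $\rho_1$ on one copy and $\rho_2$ on the other, and reconcile at the seam---does not go through as stated. Following the companion pairing through the $\Delta_2$ crossing forces that crossing to be a switch (a non-switch transposes the two copies and the pairing no longer closes up around the knot), and if $\rho_1$ and $\rho_2$ happen to pair the basepoint strand with the \emph{same} companion---which they may, since they need only differ somewhere on the front---the two ruling disks at that switch are parallel copies of a single disk of $\Lambda$ and are therefore interleaved, not nested or disjoint. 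That is precisely the non-normal configuration that kills $S(U,\Delta_2)$, and even choosing the basepoint where the rulings differ does not guarantee a normal configuration there; you have no control over the relative positions of the two companion strands.

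The missing idea in the paper's argument is to relocate the half-twist and to use a hybrid, rather than componentwise, assignment of switches. One takes $c$ to be the \emph{rightmost} crossing at which $\rho_1$ and $\rho_2$ differ, labelled so that $\rho_2$ switches at $c$ and $\rho_1$ does not, and inserts $\Delta_2$ among the four crossings of the $2$-copy lying over $c$, along the doubled undercrossing strand. To the right of $c$ the two rulings agree and one uses the $2$-copy of their common ruling; to the left, the top and bottom halves follow $\rho_1$ and $\rho_2$ respectively; over $c$ itself one places switches at the south and west crossings and then verifies normality in each of the three possible configurations of the companion strands (Figure~\ref{fig:ruling-2copy-crossing}), which is where the normality of $\rho_2$ at $c$ is actually used. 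Without this relocation and case analysis the seam cannot be resolved, so the proposal has a genuine gap at its central step. Your alternative route through $2$-dimensional representations has the same status: the existence of an off-diagonal term that is a cocycle for the two augmentations and is nonvanishing on $t$ is asserted rather than proved, and it is not an automatic consequence of having two rulings.
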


\begin{proof}
We will use two distinct rulings $\rho_1$ and $\rho_2$ of $\Lambda$ to produce an explicit ruling of $S(\Lambda,\Delta_2)$.  We let $c$ be the rightmost crossing of a front diagram for $\Lambda$ where $\rho_1$ and $\rho_2$ differ, and we label the rulings so that $\rho_1$ does not have a switch at $c$ but $\rho_2$ does.  The $2$-copy of the given front has four crossings for every crossing in the front of $\Lambda$; we place the half-twist $\Delta_2$ inside the four crossings corresponding to $c$ along the $2$-copy of the undercrossing strand.  At all other crossings of $\Lambda$ we place a switch at the corresponding north crossing of the 2-copy if $\rho_1$ has a switch, and likewise for the south crossing if $\rho_2$ has a switch.

At the 2-copy of the distinguished crossing $c$, we place switches at the south and west of these four crossings.  This is illustrated in Figure~\ref{fig:ruling-2copy-crossing} in three different cases.  Since $\rho_1$ and $\rho_2$ are identical to the right of $c$, the ruling we have constructed looks like a ``2-copy'' of either ruling $\rho_i$ to the right of the 2-copy of $c$.  Since $\rho_2$ has a normal switch at $c$, there are three possible ways in which the companions of the strands through $c$ could be positioned relative to each other and to $c$.  Thus in Figure~\ref{fig:ruling-2copy-crossing} we verify that in each case the specified switches are normal, and that to the left of the 2-copy of $c$ the strands in the top and bottom halves of the 2-copy are paired according to $\rho_1$ and $\rho_2$ respectively.

\begin{figure}[t]
\labellist
\tiny \hair 2pt
\pinlabel $1a$ [r] at 0 112
\pinlabel $2a$ [r] at 0 104
\pinlabel $1b$ [r] at 0 96
\pinlabel $2a$ [r] at 0 88
\pinlabel $1a$ [r] at 0 41
\pinlabel $2b$ [r] at 0 33
\pinlabel $1b$ [r] at 0 25
\pinlabel $2b$ [r] at 0 17
\pinlabel $1a$ [r] at 96 96
\pinlabel $2a$ [r] at 96 88
\pinlabel $1b$ [r] at 96 41
\pinlabel $2b$ [r] at 96 33
\pinlabel $1a$ [r] at 96 25
\pinlabel $2b$ [r] at 96 17
\pinlabel $1b$ [r] at 96 9
\pinlabel $2a$ [r] at 96 1
\pinlabel $1a$ [r] at 191 128
\pinlabel $2a$ [r] at 191 120
\pinlabel $1b$ [r] at 191 112
\pinlabel $2b$ [r] at 191 104
\pinlabel $1a$ [r] at 191 96
\pinlabel $2b$ [r] at 191 88
\pinlabel $1b$ [r] at 191 41
\pinlabel $2a$ [r] at 191 33
\endlabellist
\begin{centering}
\includegraphics{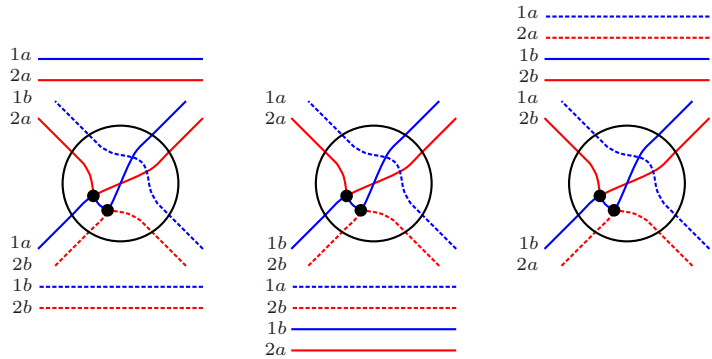}
\end{centering}
\caption{The switches placed at the 2-copy of $c$ in the proof of Theorem~\ref{thm:two-rulings}.  Strands with the same number and letter are paired, and the number indicates whether a strand corresponds to $\rho_1$ or to $\rho_2$.}
\label{fig:ruling-2copy-crossing}
\end{figure}
It is now easy to verify that the set of switches we have described provides a normal ruling of $S(\Lambda,\Delta_2)$, as desired.
\end{proof}

\begin{example}
\label{ex:ruling-2copy-946}
In Figure~\ref{fig:ruling-2copy-946} we display two different rulings
of a Legendrian knot $\Lambda$ of topological type $\overline{9_{46}}$
and the corresponding ruling of $S(\Lambda,\Delta_2)$.  (Here
$\overline{9_{46}}$ denotes the mirror of $9_{46}$, cf.\
Remark~\ref{rmk:chirality}.)
By Theorem~\ref{thm:two-rulings} it follows that there is no Lagrangian concordance from $\Lambda$ to $U$, reproving the main result of \cite{chantraine-symmetric}.
\begin{figure}[t]
\begin{centering}
\includegraphics{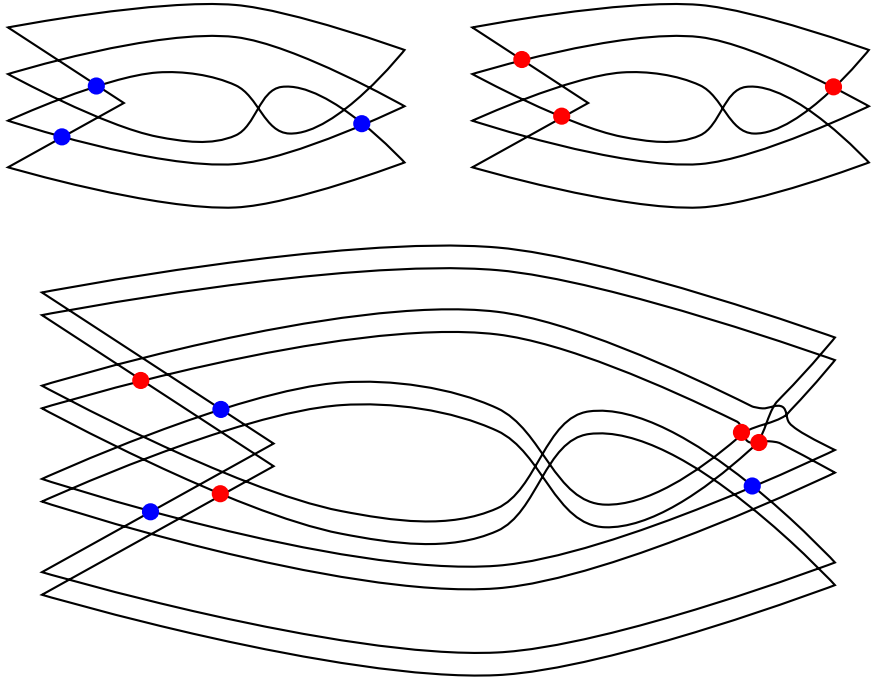}
\end{centering}
\caption{Constructing a normal ruling of $S(\Lambda,\Delta_2)$ from two different rulings of $\Lambda$.}
\label{fig:ruling-2copy-946}
\end{figure}
\end{example}

\begin{corollary}
\label{cor:ruling-polynomial-uku}
If $\Lambda \prec U$ then $\Lambda$ has ungraded ruling polynomial $R_\Lambda(z)$ equal to either $0$ or $1$.  In particular, if $U\prec \Lambda$ as well, then the $d$-graded ruling polynomial of $\Lambda$ is equal to $1$ for all $d$.
\end{corollary}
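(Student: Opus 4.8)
The plan is to combine Theorem~\ref{thm:two-rulings} with the criterion of the second author and Rutherford recalled just above it, namely that a Legendrian knot whose ruling polynomial has positive degree is $A_{(2)}$-compatible. First I would note that if $\Lambda \prec U$ then $\Lambda$ is not $A_{(2)}$-compatible, by Theorem~\ref{thm:a2-compatible}. On one hand, the contrapositive of Theorem~\ref{thm:two-rulings} shows that $\Lambda$ has at most one normal ruling; since each ruling $\rho$ contributes exactly one monomial $z^{s-c+1}$ to $R_\Lambda(z)$, this forces $R_\Lambda$ to be either $0$ or a single monomial $z^{j}$. On the other hand, if $j \geq 1$ then $R_\Lambda$ would have positive degree, whence $\Lambda$ would be $A_{(2)}$-compatible, a contradiction. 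Therefore $j=0$ and $R_\Lambda(z) \in \{0,1\}$, giving the first assertion. No Euler-characteristic or genus computation for the ruling surface is needed, since the degree is pinned down directly by the positive-degree criterion.

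For the second assertion I would feed in the extra hypothesis $U \prec \Lambda$. As recalled in the introduction, capping the negative end of a concordance from $U$ to $\Lambda$ with the standard Lagrangian disk filling of $U$ produces an exact Lagrangian disk filling of $\Lambda$. Applying Proposition~\ref{prop:ehk} to this filling, together with the grading discussion in the case $\Lambda_-=U$ (where the filling is a disk and hence has Maslov index $0$), equips $\mathcal{A}_\Lambda$ with a $\mathbb{Z}$-graded augmentation, and hence with a $\mathbb{Z}$-graded normal ruling $\rho_0$ via the augmentation--ruling correspondence \cite{fuchs, fuchs-ishkhanov, sabloff-ruling}. In particular $\rho_0$ is an ungraded ruling, so $R_\Lambda \neq 0$; by the first part $R_\Lambda = 1$, meaning $\Lambda$ has exactly one ungraded ruling with $s-c+1=0$, and this ruling must be $\rho_0$.

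Finally I would run the grading bookkeeping. Because $\rho_0$ switches only at crossings of Maslov degree $0$, it is a $d$-graded ruling for every $d$; conversely every $d$-graded ruling is an ungraded ruling and therefore coincides with the unique ruling $\rho_0$. Hence for each $d$ the set of $d$-graded rulings is exactly $\{\rho_0\}$, and the $d$-graded ruling polynomial equals $z^{s-c+1}=1$, as claimed.

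I expect the only real subtlety, rather than a genuine obstacle, to lie in the grading step of the second assertion: one must use that the filling arising from $U \prec \Lambda$ is a \emph{disk} (Maslov index $0$), so that the induced augmentation, and hence $\rho_0$, is genuinely $\mathbb{Z}$-graded rather than merely $2$-graded. This is precisely what allows the single ruling to be counted by $R^{(d)}_\Lambda$ simultaneously for all $d$; a merely $2$-graded ruling would not license the conclusion for every $d$.
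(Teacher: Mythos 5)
Your argument follows the paper's proof almost exactly: both use Theorem~\ref{thm:a2-compatible} together with the Ng--Rutherford positive-degree criterion to bound the degree of $R_\Lambda(z)$ from above, Theorem~\ref{thm:two-rulings} to bound the number of rulings by one, and then the graded augmentation coming from $U \prec \Lambda$ (the paper composes $\mathcal{A}_\Lambda \to \mathcal{A}_U$ with the augmentation of $\mathcal{A}_U$; you equivalently cap off to a disk filling) to produce the single $0$-graded ruling and conclude $R^d_\Lambda = 1$ for all $d$. The second half of your write-up is correct as stated.

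There is, however, one genuine gap in the first half, and it sits exactly where you assert that no care is needed. From ``$\Lambda$ is not $A_{(2)}$-compatible'' the positive-degree criterion gives only the upper bound $j \leq 0$ on the exponent of your single monomial $z^j$; it does nothing to exclude $j < 0$, so ``therefore $j=0$'' does not follow from ``$j \not\geq 1$'' alone. In the convention $z^{s-c+1}$ a ruling can in general contribute a negative power of $z$ (the two-component unlink contributes $z^{-1}$, which is exactly why $z^{n-1}R^1_{\Lambda_n}$ appears elsewhere in the paper), so a priori $R_\Lambda$ could be, say, $z^{-2}$, which is neither $0$ nor $1$. What closes the gap is precisely the Euler-characteristic computation you declare unnecessary: for a normal ruling of a connected Legendrian \emph{knot} the ruling surface is connected with a single boundary component, so $c - s = \chi = 1 - 2g$ and hence $s - c + 1 = 2g \geq 0$ for every ruling. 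Combining $j \geq 0$ with $j \leq 0$ gives $j = 0$ and $R_\Lambda(z) \in \{0,1\}$. This is a standard normalization fact (and the paper's own proof also leaves it implicit), but your argument needs it and explicitly disowns it, so you should put it back in.
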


\begin{proof}
Theorem~\ref{thm:a2-compatible} says that $\Lambda$ is not $A_{(2)}$-compatible, so by \cite[Theorem~5.4]{ng-rutherford} the ungraded ruling polynomial $R_\Lambda(z)$ must have degree 0.  If its constant term is at least 2 then it has two or more rulings, which is ruled out by Theorem~\ref{thm:two-rulings}, so the only remaining possibilities are $0$ and $1$.

If we also have $U\prec \Lambda$, then the morphism $\mathcal{A}_\Lambda \to \mathcal{A}_U \cong \zt$ induced by such a concordance is by definition an augmentation of $\Lambda$.  Thus $\Lambda$ has at least one graded normal ruling, and any other $d$-graded ruling would also be a second ungraded ruling of $\Lambda$, which cannot exist.
\end{proof}

We can now generalize the $\overline{9_{46}}$ example of \cite{chantraine-symmetric} and Example~\ref{ex:ruling-2copy-946} by providing an infinite family of irreversible Lagrangian concordances.  The simplest of these is a Legendrian $\overline{14n_{15581}}$ knot, which will appear later in Table~\ref{table:census-diagrams}.

\begin{theorem}
\label{thm:infinitely-many-irreversible}
There are infinitely many Legendrian knots $\Lambda$ such that $U\prec \Lambda$ but $\Lambda\not\prec U$.
\end{theorem}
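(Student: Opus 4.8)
The plan is to produce one explicit infinite family $\{\Lambda_m\}_{m\ge 1}$ of Legendrian knots and to verify the two halves of the conclusion for each member, together with a distinctness argument. I would take $\Lambda_m$ to be given by an explicit family of front diagrams generalizing the front of $\overline{9_{46}}$ from Example~\ref{ex:ruling-2copy-946}, say by inserting $m$ additional twists (or clasps) into a designated two-strand band, with the first member $\Lambda_1$ being $\overline{14n_{15581}}$, so that each $\Lambda_m$ is a smoothly slice knot with $\maxtb(\Lambda_m) = -1$. All three required facts — $U\prec\Lambda_m$, $\Lambda_m\not\prec U$, and pairwise non-isotopy — must then be checked uniformly in $m$. (One could alternatively use the connected sums $\overline{9_{46}}^{\#m}$, whose ruling polynomials multiply and whose disk fillings combine, but I prefer an explicit prime family since it realizes the examples inside the census of Table~\ref{table:census-diagrams}.)

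To show $U\prec\Lambda_m$, I would build an explicit decomposable Lagrangian disk filling of each $\Lambda_m$ out of the Ekholm--Honda--K\'alm\'an elementary pieces: a collection of unknot fillings ($0$-handles) together with pinch moves ($1$-handles) reducing the front to a single unknot. Leaving one of the birth unknots uncapped exhibits the filling, which has Euler characteristic $0$ from that unknot outward, as a genuine genus-$0$ Lagrangian concordance from $U$ to $\Lambda_m$; this simultaneously certifies $\tb(\Lambda_m)=-1$. The real content is giving the sequence of pinch moves \emph{uniformly} in $m$, which I would do inductively: the band of twists added in passing from $\Lambda_m$ to $\Lambda_{m+1}$ should be undoable by a fixed local pattern of pinches, reducing the filling of $\Lambda_{m+1}$ to that of $\Lambda_m$.

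For $\Lambda_m\not\prec U$, I would invoke Theorem~\ref{thm:two-rulings}: it suffices to exhibit at least two distinct normal rulings of $\Lambda_m$. I would generalize the two rulings of $\overline{9_{46}}$ displayed in Example~\ref{ex:ruling-2copy-946}, checking that the local twist region admits two distinct normal switch patterns that agree outside it. If exhibiting two rulings of $\Lambda_m$ directly proves awkward, I would instead apply the cable obstruction of Theorem~\ref{thm:homfly-kauffman-obstruction}/Theorem~\ref{thm:twn-ruling-polynomials}, finding two distinct rulings on a satellite $S(\Lambda_m,tw_n)$ — one guaranteed by a ruling of $\Lambda_m$ and a second forced by the twist region. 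Either way Theorem~\ref{thm:two-rulings}, applied to $\Lambda_m$ or to its cable, yields $\Lambda_m\not\prec U$. To see that the family is genuinely infinite I would compute a classical invariant that strictly grows with $m$ — for instance the Seifert genus, the breadth of the Alexander polynomial, or the degree of the HOMFLY-PT polynomial — so that the $\Lambda_m$ represent infinitely many distinct smooth, hence Legendrian, knot types.

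The main obstacle I expect is the construction of the disk fillings in a way that is uniform in $m$: finding two rulings is a local, essentially combinatorial check, and distinctness follows from any growing invariant, but producing a decomposable Lagrangian disk filling of every $\Lambda_m$ requires an explicit global sequence of pinch moves and an honest verification that the resulting cobordism from $U$ has genus $0$ (so that one obtains a concordance, not merely a higher-genus cobordism). The inductive reduction of $\Lambda_{m+1}$ to $\Lambda_m$ by a fixed local move is what makes this tractable, and getting that local move right — so that it both cancels the extra twisting and preserves sliceness — is where the care is needed.
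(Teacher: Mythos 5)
Your proposal follows essentially the same route as the paper: the authors form the family by adding half-twists to $\overline{14n_{15581}}$, obtain $U\prec\Lambda_n$ from a single pinch move producing a two-component Legendrian unlink (with one component capped off), and rule out $\Lambda_n\prec U$ by exhibiting two normal rulings that agree away from the added half-twists, i.e.\ exactly your appeal to Theorem~\ref{thm:two-rulings}. The only point you make explicit that the paper leaves implicit is the verification that the $\Lambda_n$ are pairwise distinct, which is a reasonable addition but does not change the argument.
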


\begin{proof}
We can form an infinite family $\Lambda_1,\Lambda_2,\dots$ of Legendrian knots by adding half-twists to the $\overline{14n_{15581}}$ knot of Table~\ref{table:census-diagrams}, as shown in Figure~\ref{fig:family-15581}.  Each $\Lambda_n$ is Lagrangian slice: surgering along the dotted line indicated in the figure produces a two-component Legendrian unlink, and we cap off one component to get a concordance from the Legendrian unknot to $\Lambda_n$, so $U \prec \Lambda_n$.  However, each $\Lambda_n$ admits at least two normal rulings, as illustrated in Figure~\ref{fig:family-rulings} for $\Lambda_3$; we generalize to all $\Lambda_n$ by placing a switch at every one of the added half-twists and by using the same set of switches as in either ruling of Figure~\ref{fig:family-rulings} away from the half-twists.  We conclude that $\Lambda_n \not\prec U$ for all $n$.
\begin{figure}[t]
\begin{centering}
\includegraphics{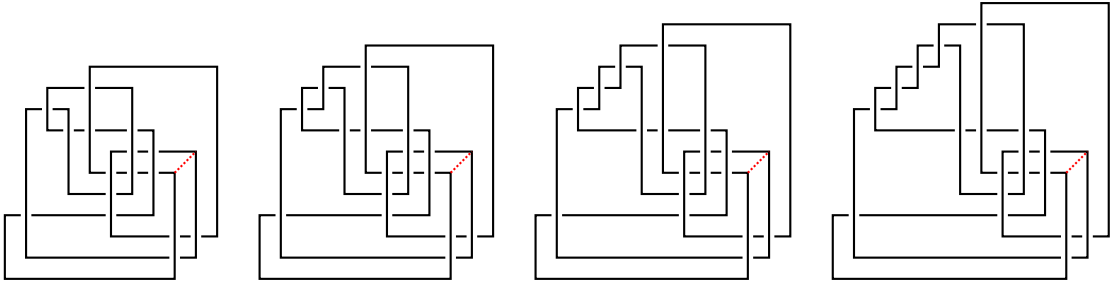}
\end{centering}
\caption{Grid diagrams for the first four members of an infinite family of Lagrangian slice knots, beginning with $\Lambda_1=\overline{14n_{15581}}$.}
\label{fig:family-15581}
\end{figure}
\begin{figure}[t]
\begin{centering}
\includegraphics{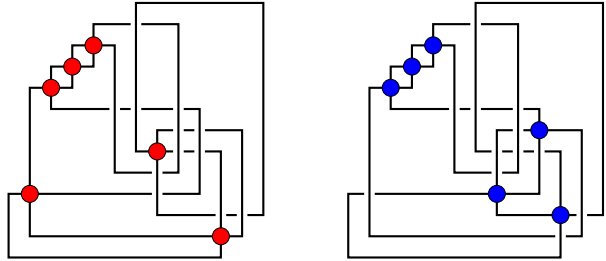}
\end{centering}
\caption{Two rulings of the third member $\Lambda_3$ of the family of Figure \ref{fig:family-15581}.}
\label{fig:family-rulings}
\end{figure}
\end{proof}

\subsection{Satellites which fix the Legendrian unknot $U$}
\label{ssec:fix-unknot}

If $\Lambda'$ is a Legendrian pattern for which $S(U,\Lambda')$ is Legendrian isotopic to $U$, then applying Theorem~\ref{thm:satellite} to any concordance $U \prec \Lambda$ or $\Lambda \prec U$ tells us that $U \prec S(\Lambda,\Lambda')$ or $S(\Lambda,\Lambda') \prec U$ respectively.  In this subsection we will investigate a family of such patterns.

Let $P_n$ denote the knot in $J^1(S^1)$ depicted in Figure~\ref{fig:twist-clasps}, where $n$ is the winding number of $P_n$ around the solid torus.  We can describe $P_n$ as the concatenation of a full positive twist $tw_n$ on $n$ strands and a cascade of $n-1$ clasps, where the $i$th clasp connects the $i$th and $(i+1)$st strands as numbered from top to bottom.

\begin{figure}[t]
\begin{centering}
\includegraphics{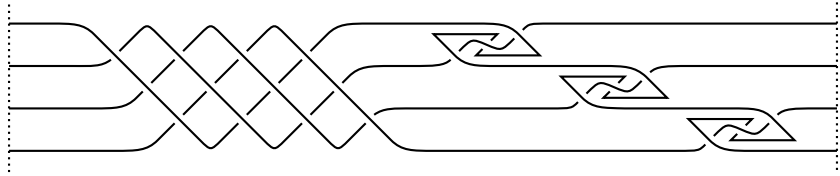}
\end{centering}
\caption{The Legendrian solid torus knot $P_n$, shown here for $n=4$.}
\label{fig:twist-clasps}
\end{figure}

\begin{lemma}
\label{lem:pn-fixes-unknot}
The satellite $S(U,P_n)$ is Legendrian isotopic to $U$ for all $n \geq 1$.
\end{lemma}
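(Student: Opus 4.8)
The plan is to deduce the Legendrian isotopy from the classical invariants by invoking Legendrian simplicity of the unknot. By the theorem of Eliashberg and Fraser classifying Legendrian unknots, any Legendrian knot whose underlying smooth type is unknotted is determined up to Legendrian isotopy by the pair $(\tb, r)$, where $r$ is the rotation number; moreover $\tb = -1$ forces $r = 0$. Thus it suffices to prove two things: that $S(U,P_n)$ is smoothly unknotted, and that $\tb(S(U,P_n)) = -1$.

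For the smooth unknottedness, the key observation I would make is that the positive full twist $tw_n$ is present precisely to cancel the contact framing. Since $\tb(U) = -1$, the Thurston--Bennequin framing used in the satellite construction is the $(-1)$-framing relative to the Seifert framing, so the $n$-copy of $U$ is the $(n,-n)$-cable; equivalently, passing from the product framing of $J^1(S^1)$ to the contact framing of $N(U)$ inserts one negative full twist $tw_n^{-1}$ on the $n$ strands. Because the full twist is central, it cancels the $tw_n$ factor of $P_n$, so that $S(U,P_n)$ is smoothly isotopic to the $0$-framed satellite of the unknot whose pattern is the bare cascade of $n-1$ clasps. I would then show this clasp pattern is unknotted by peeling off one strand at a time: the outermost clasp lets the $n$-th strand be isotoped across and absorbed, reducing the winding number from $n$ to $n-1$ and the pattern from $n-1$ clasps to $n-2$ clasps. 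Iterating down to the base case $P_1$, which is the core and satisfies $S(U,P_1) = U$, gives the unknot.

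It remains to compute $\tb$. Working directly from the front of $S(U,P_n)$, which is the $n$-copy of the front of $U$ cut open with the front of $P_n$ inserted, one has $\tb(S(U,P_n)) = n^2 \tb(U) + e(P_n) = -n^2 + e(P_n)$, where $e(P_n)$ denotes the writhe minus the number of right cusps of the front of $P_n$ computed in the annulus (this formula is pinned down by the check that the core pattern, with $e=0$ and winding $1$, recovers $\tb(\Lambda)$). A direct count of crossings and cusps in Figure~\ref{fig:twist-clasps}---the $n(n-1)$ positive crossings of $tw_n$ together with the contributions of the $n-1$ clasps---yields $e(P_n) = n^2 - 1$, hence $\tb(S(U,P_n)) = -1$ as required. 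One may likewise check that the up and down cusps balance so that the rotation number is $0$, although this is automatic once $\tb = -1$.

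I expect the main obstacle to be the smooth unknottedness, and specifically the inductive peeling-off of strands in the clasp cascade. Making the framing cancellation rigorous requires care in tracking how the product framing of $J^1(S^1)$ maps to the contact framing and verifying that the resulting negative full twist is exactly $tw_n^{-1}$; and confirming that each outermost clasp genuinely allows the corresponding strand to be removed by an ambient isotopy, rather than merely simplifying the diagram, is the step most in need of an explicit picture. The $\tb$ bookkeeping, by contrast, is a routine finite computation from the figure.
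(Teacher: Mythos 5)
Your proposal is correct and follows essentially the same route as the paper: compute $\tb(S(U,P_n))=-1$, establish smooth unknottedness by inductively removing one clasp (and hence one strand of winding) at a time via an isotopy through the complementary solid torus, and then invoke the Eliashberg--Fraser classification of Legendrian unknots by $(\tb,r)$. The only cosmetic difference is that you cancel the full twist $tw_n$ against the contact framing before peeling off clasps, whereas the paper simplifies $S(U,P_n)$ directly to $S(U,P_{n-1})$; in both cases the key clasp-removal step is justified by an explicit figure, exactly as you anticipated.
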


\begin{proof}
The lemma is true for $n=1$ by inspection, and a straightforward computation shows that $\tb(S(U,P_n))=-1$, so it suffices to check that $S(U,P_n)$ is topologically isotopic to $S(U,P_{n-1})$ for all $n \geq 2$.  This is illustrated in Figure~\ref{fig:twist-unknot-simplify}: the highlighted portion of $S(U,P_n)$ can be pushed back through the middle of the satellite, lifted behind it, and then twisted to remove the top clasp, and what remains is $S(U,P_{n-1})$.
\begin{figure}[t]
\begin{centering}
\includegraphics{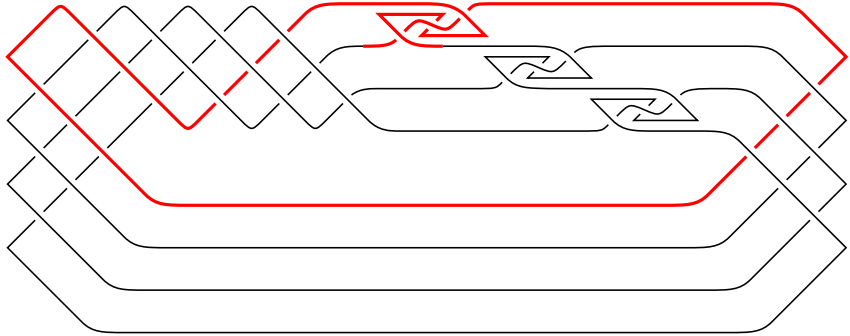}
\end{centering}
\caption{The satellite $S(U,P_4)$ is isotopic to $S(U,P_3)$, as can be seen by moving the highlighted portion appropriately to simplify the knot diagram.}
\label{fig:twist-unknot-simplify}
\end{figure}
\end{proof}

\begin{theorem}
\label{thm:rulings-tw-vs-p}
The $d$-graded ruling polynomials of $S(\Lambda,P_n)$ and $S(\Lambda,tw_n)$ are related by
\[ R^d_{S(\Lambda,P_n)}(z) = z^{n-1} \cdot R^d_{S(\Lambda,tw_n)}(z) \]
for any Legendrian knot $\Lambda$ and any $d \mid 2r(\Lambda)$.
\end{theorem}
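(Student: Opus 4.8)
The plan is to set up an explicit bijection between the $d$-graded normal rulings of $S(\Lambda,tw_n)$ and those of $S(\Lambda,P_n)$ under which each of the $n-1$ clasps of $P_n$ accounts for exactly one extra switch and no extra right cusps. Summing the contributions $z^{s-c+1}$ (in the convention of \cite{rutherford}) over all rulings then produces the factor $z^{n-1}$. The argument is essentially local and inductive on the number of clasps.

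First I would record that, by the description of $P_n$ as $tw_n$ concatenated with a cascade of clasps, the front of $S(\Lambda,P_n)$ is literally the front of $S(\Lambda,tw_n)$ with the $n-1$ clasps inserted; the two diagrams therefore agree away from a small disk containing the clasps, and in particular they have the same right cusps. Each clasp consists of the two crossings between a pair of adjacent strands (the $i$th and $(i+1)$st), and these two strands exit the clasp in the same vertical order in which they entered, so the ruling data on the two sides of the clasp can be compared directly. The heart of the argument is a switch-forcing claim: in any $d$-graded normal ruling of $S(\Lambda,P_n)$, the two crossings of a given clasp must carry exactly one switch, this switch is forced by the ruling on the two sides, and conversely any ruling of the strands entering the clasp extends across it in exactly one way. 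This reduces to checking the normality condition (no interleaving of the paired ``eyes'' at a switch) in the finitely many cases for how the two clasp strands are paired relative to each other and to the rest of the diagram --- the same style of case check carried out in the proof of Theorem~\ref{thm:two-rulings} and illustrated in Figure~\ref{fig:ruling-2copy-crossing}. Carrying this out at each of the $n-1$ clasps (by induction on the number of clasps) yields the bijection.

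The grading bookkeeping is where the hypothesis $d \mid 2r(\Lambda)$ enters, and I expect it to be the most delicate point. The two strands of a clasp are parallel copies of $\Lambda$ inside the $n$-copy, so the Maslov potentials one assigns around the longitude of the solid torus shift by a multiple of $2r(\Lambda)$; this is exactly why the $d$-graded ruling polynomial of the satellite is only defined once $d \mid 2r(\Lambda)$. The same computation should show that the forced clasp crossing has degree $\equiv 0 \pmod d$, so that the extra switch is permitted in a $d$-graded ruling. I would verify this by computing the Maslov-potential difference of the two strands at the clasp crossing directly from the $n$-copy and checking its divisibility by $d$; this is the step most likely to require care, and it is the reason the statement is restricted to $d \mid 2r(\Lambda)$.

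Finally, with the bijection established, I would conclude by comparing contributions ruling by ruling: a $d$-graded ruling $\rho$ of $S(\Lambda,tw_n)$ with $s$ switches and $c$ right cusps corresponds to a $d$-graded ruling of $S(\Lambda,P_n)$ with $s+(n-1)$ switches and the same $c$ right cusps, so its contribution $z^{s-c+1}$ is multiplied by exactly $z^{n-1}$. Summing over all $d$-graded rulings gives $R^d_{S(\Lambda,P_n)}(z) = z^{n-1}\,R^d_{S(\Lambda,tw_n)}(z)$, as desired. The main obstacle is the local switch-forcing claim together with its grading refinement; the remainder is bookkeeping with the switch and cusp counts.
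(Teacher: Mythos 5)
Your overall strategy --- a bijection between rulings of $S(\Lambda,tw_n)$ and $S(\Lambda,P_n)$ obtained by a local analysis at each clasp, followed by bookkeeping of switches and right cusps in the weight $z^{s-c+1}$ --- is exactly the paper's strategy. But your local model of a clasp is wrong, and since that model is the heart of the argument, the proof as written does not go through. You describe a clasp as ``the two crossings between a pair of adjacent strands,'' with the strands exiting in the same vertical order they entered and with no new cusps, so that $S(\Lambda,P_n)$ and $S(\Lambda,tw_n)$ ``have the same right cusps.'' In a front projection this configuration cannot be a clasp: between cusps each strand is a graph of $z$ over $x$, so if a strand passes from above to below a parallel strand and back with two transverse crossings and no cusps in between, the slope comparison forces it to be the overstrand at one crossing and the understrand at the other --- a cancelable Reidemeister~II pair, not a clasp. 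A genuine clasp in a Legendrian front requires cusps; in the paper each clasp of $P_n$ contributes \emph{three} crossings and \emph{two} right cusps, all three crossings are forced to be switches in any normal ruling (with the short strands at the new cusps paired off into small eyes that can be deleted to recover a ruling of $S(\Lambda,tw_n)$), and the net contribution per clasp is $z^{3-2}=z$. Your count of ``one extra switch, no extra right cusps'' lands on the same factor of $z$ per clasp only by coincidence, and your switch-forcing claim (exactly one switch among two crossings) is not the correct one.

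Two smaller points. First, your claim that the two diagrams agree away from the clasps and in particular have the same right cusps contradicts the cusp count above; relatedly, $S(\Lambda,tw_n)$ is an $n$-component link while $S(\Lambda,P_n)$ is a knot, and it is precisely the clasps (with their cusps) that join the components, so they cannot be invisible to the cusp count. Second, your treatment of the grading is essentially right and matches the paper: the two strands at a clasp are parallel copies of $\Lambda$ whose Maslov potentials differ by a multiple of $2r(\Lambda)$, so the clasp crossings have degree $0 \pmod{d}$ whenever $d \mid 2r(\Lambda)$, and the forced switches are admissible in a $d$-graded ruling. To repair the proof, replace your local picture with the correct front-diagram clasp (three crossings, two right cusps, two left cusps), show all three crossings must be switches and that deleting the resulting eyes gives the bijection, and redo the exponent count as $(s+3(n-1))-(c+2(n-1))+1 = (s-c+1)+(n-1)$.
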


\begin{proof}
Each of the $n-1$ clasps in $P_n$ contains three crossings and two right cusps, and the gradings of these crossings are $0\pmod{2r(\Lambda)}$ since the Maslov potentials on parallel strands coming from adjacent copies of $\Lambda$ differ by $2r(\Lambda)$.  It is easy to see that any normal ruling of $S(\Lambda,P_n)$ must have switches at all of these crossings, and that once we remove the pairs of short strands incident to these cusps from the ruling then what remains is a normal ruling of $S(\Lambda,tw_n)$.  Conversely, any ruling of $S(\Lambda,tw_n)$ gives rise to a ruling of $S(\Lambda,P_n)$ in this fashion, and so there is a bijection between the sets of rulings of the two satellites.

Let $\rho_{tw}$ be a ruling of $S(\Lambda,tw_n)$ with $s$ switches, and suppose that $S(\Lambda,tw_n)$ has $c$ right cusps.  Then $\rho_{tw}$ contributes $z^{s-c+1}$ to its ruling polynomial.  The above bijection pairs $\rho_{tw}$ with some ruling $\rho_P$ of $S(\Lambda,P_n)$ which has $s+3(n-1)$ switches, and $S(\Lambda,P_n)$ has $c+2(n-1)$ right cusps, so $\rho_P$ contributes
\[ z^{(s+3(n-1))-(c+2(n-1))+1} = z^{n-1} \cdot z^{s-c+1} \]
to the ruling polynomial of $S(\Lambda,P_n)$.  Since this is true for all rulings $\rho_{tw}$ of $S(\Lambda,P_n)$, the ruling polynomials of the two satellites differ by a factor of $z^{n-1}$ as desired.
\end{proof}

\begin{theorem}
\label{thm:twn-ruling-polynomials}
Let $\Lambda$ be a Legendrian knot such that $U \prec \Lambda \prec U$.  Then $S(\Lambda,tw_n)$ has $d$-graded ruling polynomial $R^d_{S(\Lambda,tw_n)}(z) = z^{1-n}$ for all $n \geq 1$ and all $d$.
\end{theorem}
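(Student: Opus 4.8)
The plan is to reduce the statement to the already-established behaviour of the $P_n$-satellite under concordance, and then to trade the pattern $P_n$ for $tw_n$ using the ruling-polynomial comparison of Theorem~\ref{thm:rulings-tw-vs-p}. Concretely, I would apply Theorem~\ref{thm:satellite} twice, with pattern $P_n$, to the two given concordances $U \prec \Lambda$ and $\Lambda \prec U$, obtaining $S(U,P_n) \prec S(\Lambda,P_n)$ and $S(\Lambda,P_n) \prec S(U,P_n)$. By Lemma~\ref{lem:pn-fixes-unknot}, $S(U,P_n)$ is Legendrian isotopic to $U$, so these two concordances become $U \prec S(\Lambda,P_n) \prec U$.

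Once this sandwich is in place, Corollary~\ref{cor:ruling-polynomial-uku} applies directly to the knot $S(\Lambda,P_n)$: since it is concordant both to and from $U$, its $d$-graded ruling polynomial $R^d_{S(\Lambda,P_n)}(z)$ equals $1$ for every $d$. It then remains only to pull this information back across the factor of $z^{n-1}$ supplied by Theorem~\ref{thm:rulings-tw-vs-p}, namely $R^d_{S(\Lambda,P_n)}(z) = z^{n-1} R^d_{S(\Lambda,tw_n)}(z)$, which forces $R^d_{S(\Lambda,tw_n)}(z) = z^{1-n}$, as claimed. (For $n=1$ this reads $R^d_{S(\Lambda,tw_1)}(z)=1$, consistent with $S(\Lambda,tw_1)\simeq\Lambda$ and the corollary.)

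The one genuine subtlety, and the step I would be most careful about, is the quantifier \emph{``for all $d$''}. Theorem~\ref{thm:rulings-tw-vs-p} is stated only for $d \mid 2r(\Lambda)$, so to cover every grading I first need to observe that $r(\Lambda)=0$. This follows because $U \prec \Lambda$ is a Lagrangian concordance and, by Chantraine's work \cite{chantraine-concordance}, a concordance preserves the rotation number; since $r(U)=0$ we get $r(\Lambda)=0$, whence $2r(\Lambda)=0$ is divisible by every $d$ and the comparison theorem is available in all gradings. The same remark shows $r(S(\Lambda,P_n))=0$, so that the $d$-graded ruling polynomials in play are honestly defined for all $d$ and the conclusion of Corollary~\ref{cor:ruling-polynomial-uku} is meaningful for each of them. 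With that observation the argument is a direct chain of the four cited results, so I do not expect any further obstruction: the substantive work has already been front-loaded into Theorem~\ref{thm:satellite}, Theorem~\ref{thm:rulings-tw-vs-p}, and Lemma~\ref{lem:pn-fixes-unknot}.
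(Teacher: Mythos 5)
Your argument is correct and follows essentially the same route as the paper's proof: satellite both concordances by $P_n$, use $S(U,P_n)\cong U$ and Corollary~\ref{cor:ruling-polynomial-uku} to get $R^d_{S(\Lambda,P_n)}(z)=1$, then divide by $z^{n-1}$ via Theorem~\ref{thm:rulings-tw-vs-p}. Your care with the rotation-number hypothesis ($r(\Lambda)=0$, so $d\mid 2r(\Lambda)$ for every $d$) matches the paper's observation that $r(S(\Lambda,P_n))=r(U)=0$.
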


\begin{proof}
Taking satellites $S(\cdot, P_n)$ and applying Theorem~\ref{thm:satellite} gives $U = S(U,P_n) \prec S(\Lambda,P_n)$ and $S(\Lambda,P_n) \prec S(U,P_n) = U$.  This implies that $r(S(\Lambda,P_n))=r(U)=0$, hence from Corollary~\ref{cor:ruling-polynomial-uku} we must have $R^d_{S(\Lambda,P_n)}(z)=1$ for all $d$ and the conclusion about $R^d_{S(\Lambda,tw_n)}(z)$ is a consequence of Theorem~\ref{thm:rulings-tw-vs-p}.
\end{proof}

\begin{remark}
Under the hypotheses of Theorem~\ref{thm:twn-ruling-polynomials}, the satellite $S(\Lambda,tw_n)$ has exactly one ruling for each $n$.  We can construct such a ruling as the $n$-copy of the unique ruling of $\Lambda$, by taking each switch of that ruling and placing switches among the corresponding $n^2$ crossings of $S(\Lambda,tw_n)$ at exactly the $n$ crossings where both strands belong to the same component.  Thus if $U\prec \Lambda$ then we can prove $\Lambda\not\prec U$ by exhibiting a single ruling of some $S(\Lambda,tw_n)$ (or even of the $n$-copy of $\Lambda$) which is not the $n$-copy of a ruling of $\Lambda$.
\end{remark}

Since a knot $\Lambda$ for which $U \prec \Lambda \prec U$ must have Thurston--Bennequin invariant $-1$, the $n$-copy of $\Lambda$ is topologically the $(n,-n)$-cable of $\Lambda$.  We obtain $S(\Lambda,tw_n)$ from the $n$-copy by inserting a full positive twist, so
the Legendrian satellite $S(\Lambda,tw_n)$ is topologically the $(n,0)$-cable of $\Lambda$, i.e.\ the Seifert-framed $n$-stranded cable of $\Lambda$.  We will denote this $n$-cable by $\Lambda_n$.

In the cases $d=1$ and $d=2$, Rutherford \cite{rutherford} showed that
the $d$-graded ruling polynomials of a Legendrian link
$\Lambda$ of topological type $K$ are determined by its Kauffman polynomial $F_K(a,z)$ (the Dubrovnik version) and its HOMFLY-PT polynomial $P_K(a,z)$.  More precisely, these polynomials both have maximum $a$-degree at most $-\maxtb(K)-1$ \cite{rudolph-kauffman, franks-williams, morton}, and  in fact he proved that $R^1_\Lambda(z)$ and $R^2_\Lambda(z)$ are the coefficients of $a^{-\tb(\Lambda)-1}$ in $F_K(a,z)$ and $P_K(a,z)$, respectively.  We can compute that $\tb(S(\Lambda,tw_n)) = n^2 (\tb(\Lambda)+1) - n$, so if $U \prec \Lambda \prec U$, then $\tb(S(\Lambda,tw_n)) = -n$.  We have therefore proved the following.

\begin{theorem}
\label{thm:homfly-kauffman-obstruction}
If a smooth knot $K$ has a Legendrian representative $\Lambda$ such that $U \prec \Lambda \prec U$, and $K_n$ denotes the topological $n$-cable of $K$, then the HOMFLY-PT and Kauffman polynomials $P_{K_n}(a,z)$ and $F_{K_n}(a,z)$ both have maximal $a$-degree equal to $n-1$ and $a^{n-1}$-coefficient equal to $z^{1-n}$ for all $n \geq 1$.
\end{theorem}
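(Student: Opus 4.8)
The plan is to assemble the conclusion from the ruling-polynomial computation in Theorem~\ref{thm:twn-ruling-polynomials} together with Rutherford's identification of graded ruling polynomials as coefficients of the HOMFLY-PT and Kauffman polynomials, so the work is essentially bookkeeping. First I would record the numerical consequences of the hypothesis $U \prec \Lambda \prec U$. By \cite{chantraine-concordance} this forces $\tb(\Lambda) = -1$, and substituting into the formula $\tb(S(\Lambda,tw_n)) = n^2(\tb(\Lambda)+1) - n$ quoted above yields $\tb(S(\Lambda,tw_n)) = -n$. Since $S(\Lambda,tw_n)$ is a Legendrian representative of the topological cable $K_n$, this in particular gives the inequality $\maxtb(K_n) \geq -n$.

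Next I would read off the two relevant ruling polynomials and transport them to the knot polynomials. Theorem~\ref{thm:twn-ruling-polynomials} supplies $R^1_{S(\Lambda,tw_n)}(z) = R^2_{S(\Lambda,tw_n)}(z) = z^{1-n}$. Applying Rutherford's theorem \cite{rutherford} to the representative $S(\Lambda,tw_n)$ of $K_n$, the coefficient of $a^{-\tb(S(\Lambda,tw_n))-1} = a^{n-1}$ in $F_{K_n}(a,z)$ is $R^1_{S(\Lambda,tw_n)}(z) = z^{1-n}$, and likewise the coefficient of $a^{n-1}$ in $P_{K_n}(a,z)$ is $R^2_{S(\Lambda,tw_n)}(z) = z^{1-n}$. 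This already gives the asserted value of the $a^{n-1}$-coefficient for both polynomials.

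Finally I would pin down the maximal $a$-degree by trapping it between matching bounds. The coefficient just computed is the nonzero Laurent polynomial $z^{1-n}$, so the maximal $a$-degree of each of $P_{K_n}$ and $F_{K_n}$ is at least $n-1$. On the other hand, the classical bounds of \cite{rudolph-kauffman, franks-williams, morton} give maximal $a$-degree at most $-\maxtb(K_n)-1$, and combining this with $\maxtb(K_n) \geq -n$ produces the upper bound $-\maxtb(K_n)-1 \leq n-1$. The two bounds coincide, so $\maxdeg_a P_{K_n}(a,z) = \maxdeg_a F_{K_n}(a,z) = n-1$, completing the proof (and incidentally showing $\maxtb(K_n) = -n$).

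Since all of the machinery is already in hand, I do not anticipate any genuine difficulty; the only step requiring a moment of care is the last one, where one must observe that the nonvanishing of the ruling-polynomial coefficient is precisely what upgrades the inequality $\maxtb(K_n) \geq -n$ to an equality and fixes the top $a$-degree at $n-1$, rather than merely bounding it from above.
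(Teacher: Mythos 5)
Your proposal is correct and follows essentially the same route as the paper: the authors likewise combine the computation $\tb(S(\Lambda,tw_n)) = n^2(\tb(\Lambda)+1)-n = -n$ with Theorem~\ref{thm:twn-ruling-polynomials} and Rutherford's identification of $R^1$ and $R^2$ with the top $a$-coefficients of $F_{K_n}$ and $P_{K_n}$, using the classical degree bounds to pin the maximal $a$-degree at $n-1$. The only difference is presentational: the paper folds this argument into the paragraph preceding the theorem statement, whereas you spell out explicitly (and correctly) that the nonvanishing of $z^{1-n}$ is what forces the degree bound to be attained.
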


\begin{remark}
The conclusion of
Theorem~\ref{thm:homfly-kauffman-obstruction} depends only on the
smooth knot type, so if the theorem can be used to prove that
$\Lambda\not\prec U$ for one Legendrian representative $\Lambda$ of
$K$ with $U\prec \Lambda$, then it proves $\Lambda\not\prec U$ for all
such representatives.
\label{rmk:smoothtype}
\end{remark} 
%!TEX root = concordances.tex

\section{Concordances to the Unknot}
\label{sec:unknots}

We say that a Lagrangian concordance $\Lambda_0 \prec \Lambda_1$ is \emph{decomposable} if it can be built as a composition of elementary moves, namely isotopies, minimum cobordisms, and saddle cobordisms in the language of \cite[Section 6]{ehk}.  Our goal in this section is to prove that $\Lambda_1$ can never be a topological unknot in such a concordance; the proof will make no use of any contact topology, proceeding instead by considering branched double covers of knots.  We thus begin with the following theorem, which may be of independent interest.

\begin{theorem}
\label{thm:double-cover-representation}
Let $\Sigma(K)$ denote the double cover of $S^3$ branched over the knot $K$.  Then the fundamental group of $\Sigma(K)$ admits a nontrivial representation
\[ \rho: \pi_1(\Sigma(K)) \to SO(3) \]
if and only if $K$ is not an unknot.
\end{theorem}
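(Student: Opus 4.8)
The statement has two directions, and essentially all of the content lies in one of them. The reverse implication is elementary: if $K$ is an unknot then its double branched cover $\Sigma(K)$ is $S^3$, so $\pi_1(\Sigma(K))$ is trivial and admits only the trivial representation to $SO(3)$; equivalently, the existence of a nontrivial representation forces $K$ to be knotted. My plan for the forward direction is to assume $K$ is nontrivial and produce a nontrivial $\rho\colon \pi_1(\Sigma(K)) \to SO(3)$, separating this into an elementary case governed by the determinant of $K$ and a single hard case that requires input from gauge theory.

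First I would dispose of the case $\det(K) \neq \pm 1$. Here $H_1(\Sigma(K))$ is a finite abelian group of order $|\Delta_K(-1)| = |\det(K)| > 1$, hence nontrivial. Choosing a surjection onto a nontrivial cyclic quotient $\zz/m$ and an embedding $\zz/m \hookrightarrow SO(2) \subset SO(3)$ as a group of rotations, the composite
\[
\pi_1(\Sigma(K)) \twoheadrightarrow H_1(\Sigma(K)) \twoheadrightarrow \zz/m \hookrightarrow SO(3)
\]
is a nontrivial representation. This reduces the theorem to the case where $\Sigma(K)$ is an integral homology sphere, so that $\pi_1(\Sigma(K))$ is perfect and no abelian representation can help.

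For the remaining case the key input is the work of Kronheimer and Mrowka \cite{km-excision}: for any nontrivial knot $K$ there is an \emph{irreducible} representation $\widetilde{\rho}\colon \pi_1(S^3 \setminus K) \to SU(2)$ sending a meridian $\mu$ to a traceless element (one conjugate to $\mathbf{i}$). This is exactly the sort of nonabelian representation detected by their singular instanton theory, in which the meridional holonomy is asymptotically an order-four element; it is the gauge-theoretic heart of the argument and the one step I do not expect to reprove. Granting it, I would convert $\widetilde{\rho}$ into a representation of the branched cover by a covering-space argument. Projecting to $SO(3)$ gives $\rho\colon \pi_1(S^3 \setminus K) \to SO(3)$ with $\rho(\mu)$ a nontrivial involution, since $\widetilde\rho(\mu)$ is traceless. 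The complement $\Sigma(K) \setminus \tilde K$ of the branch locus is the connected double cover of $S^3 \setminus K$ classified by the mod-$2$ linking number homomorphism $\pi_1(S^3 \setminus K) \to \zz/2$, so its fundamental group is the index-two subgroup $H$ given by the kernel of this map; filling the branch locus back in kills the class $\mu^2$ (the meridian of $\tilde K$ projects to $\mu^2$), and $\pi_1(\Sigma(K))$ is the quotient of $H$ by the normal closure of $\mu^2$. Because $\rho(\mu)^2 = 1$, the restriction $\rho|_H$ annihilates this normal closure and descends to a representation $\sigma\colon \pi_1(\Sigma(K)) \to SO(3)$.

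It remains to check that $\sigma$ is nontrivial, and this is where irreducibility of $\widetilde\rho$ is used: if $\rho|_H$ were trivial then $\rho$ would factor through $\pi_1(S^3\setminus K)/H \cong \zz/2$, so its image would be the order-two group generated by the involution $\rho(\mu)$, which fixes an axis in $\R^3$ and is therefore reducible; lifting back, $\widetilde\rho$ would have abelian image and be reducible, a contradiction. Hence $\sigma$ is a nontrivial $SO(3)$ representation of $\pi_1(\Sigma(K))$, completing the forward direction. I expect the main obstacle to be entirely the Kronheimer--Mrowka input: everything else is elementary covering-space topology, but the existence of an irreducible traceless $SU(2)$ representation of a nontrivial knot group is a deep theorem. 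The only delicate bookkeeping on the topological side is identifying the meridian of the branch locus with $\mu^2$ and verifying that the traceless (cone-angle-$\pi$) condition is exactly what makes $\rho$ extend over the filled-in branch locus.
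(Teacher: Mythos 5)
Your proposal is correct and follows essentially the same route as the paper: the Kronheimer--Mrowka existence of an irreducible $SU(2)$ representation sending a meridian to a traceless element, projected to $SO(3)$ and restricted to the index-two subgroup so that $\mu^2$ dies, yields the desired representation of $\pi_1(\Sigma(K))$. The only differences are cosmetic: your preliminary case split on $\det(K)$ is harmless but unnecessary, and you establish nontriviality by noting that a trivial restriction to $H$ would force $\widetilde\rho$ to have abelian (hence reducible) image, whereas the paper exhibits an explicit element $\mu\nu\in N$ with nontrivial image --- both arguments are valid.
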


\begin{proof}
If $K$ is the unknot then this is immediate, since $\Sigma(K) \cong S^3$ is simply connected.

For nontrivial $K$, a theorem of Kronheimer and Mrowka \cite[Corollary 7.17]{km-excision} says that given a meridian $\mu$ of $K$, there is an irreducible homomorphism $\varphi: \pi_1(S^3 \ssm K) \to SU(2)$ such that $\varphi(\mu) = i$.  (Here we view $SU(2)$ as the unit quaternions.)  Since $\pi_1(S^3\ssm K)$ is normally generated by $\mu$, it has a unique normal subgroup $N$ of index 2, namely the kernel of the composition of the abelianization and mod 2 reduction maps $\pi_1(S^3\ssm K) \to \zz \to \zz/2\zz$, corresponding to the double cover of $S^3 \ssm K$.  This subgroup contains $\mu^2$, and in fact $\pi_1(\Sigma(K)) = N/\langle \mu^2\rangle$, so we wish to use $\varphi$ to produce a map $N/\langle \mu^2\rangle \to SO(3)$.

Consider the composition $\tilde{\varphi}: \pi_1(S^3\ssm K)\to SO(3)$ of $\varphi$ with the quotient $SU(2)\to SO(3)$.  Since $\varphi(\mu^2) = i^2 = -1$ in $SU(2)$, it follows that $\tilde{\varphi}(\mu^2) = 1$ in $SO(3)$, and so $\tilde{\varphi}|_N$ descends to a map
\[ \rho: \pi_1(\Sigma(K)) = N/\langle \mu^2\rangle \to SO(3). \]
We need to check that $\rho$ is nontrivial.  But since $\pi_1(S^3\ssm K)$ is generated by meridians of $K$ and $\varphi$ is irreducible, there must be some meridian $\nu$ such that $\varphi(\nu) \neq \pm i$.  The product $\mu\nu$ lies in $N$, and since $\varphi(\nu) \neq \pm i$ we cannot have $\varphi(\mu\nu) = \pm 1$, so $\rho(\mu\nu) \neq 1$ as desired.
\end{proof}

\begin{theorem}
There are no decomposable Lagrangian concordances of the form $\Lambda
\prec \tilde{U}$, where $\Lambda$ is a topologically nontrivial Legendrian
knot and $\tilde{U}$ is any topologically unknotted Legendrian knot.
\label{thm:unknot}
\end{theorem}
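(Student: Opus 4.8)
The plan is to reduce the statement to the fundamental group of a branched double cover and to finish with Theorem~\ref{thm:double-cover-representation}. Write $K$ for the smooth knot type of $\Lambda$. By the contrapositive of Theorem~\ref{thm:double-cover-representation}, it suffices to show that a decomposable concordance from $\Lambda$ to a topologically unknotted $\tilde U$ forces $\pi_1(\Sigma(K))$ to be trivial: then $\Sigma(K)$ admits no nontrivial representation to $SO(3)$, and the theorem gives that $K$ is the unknot. This is precisely the point at which Theorem~\ref{thm:double-cover-representation} substitutes for the Smith conjecture. So I assume such a concordance and work to prove $\pi_1(\Sigma(K))=1$.

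First I would record the underlying topology. In the language of \cite{ehk} a decomposable concordance is a composition of isotopies, minimum cobordisms, and saddle cobordisms; read from $\Lambda$ at the bottom to $\tilde U$ at the top, the minimum cobordisms are births of split unknots and the saddles are band moves, and there are no maxima. Thus the concordance is a ribbon concordance from $K$ to the unknot, and since the cylinder has genus $0$ the births and saddles occur in equal numbers. Passing to double branched covers of $S^3$ over each slice converts the movie into a smooth cobordism $W$ from $\Sigma(K)$ to $\Sigma(\tilde U)=S^3$: a birth replaces the cover $Y$ by $Y\#(S^1\times S^2)$ and contributes a $4$-dimensional $1$-handle, while a saddle contributes a $2$-handle by the Montesinos trick. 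Hence $W$ is obtained from $\Sigma(K)\times[0,1]$ by attaching equally many $1$- and $2$-handles. Capping the $S^3$ end with a $4$-ball produces $W^+$ with $\partial W^+=\Sigma(K)$; read from the $4$-ball its handles have indices $0$, $2$, and $3$ only, so $W^+$ is simply connected.

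The crux is to turn the absence of maxima into the triviality of $\pi_1(\Sigma(K))$ itself. Simple connectivity of $W^+$ is by itself too weak—a nontrivial rational homology sphere can bound a simply-connected rational homology ball—so what is really needed is that the bottom inclusion $\pi_1(\Sigma(K))\to\pi_1(W)$ be injective, which combined with $\pi_1(W)=1$ gives $\pi_1(\Sigma(K))=1$. The model is Gordon's ribbon-concordance lemma, whose no-maxima structure yields a retraction exhibiting the lower knot group as a retract of the upper one; here I would lift that retraction to the branched double covers, so that $\pi_1(\Sigma(K))$ becomes a retract of $\pi_1(\Sigma(\tilde U))=\pi_1(S^3)=1$. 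I expect this lift to be the main obstacle: the branched double cover of the concordance is not the cover of its exterior, so the retraction coming from the ribbon structure must be carried through the branch locus, and it is exactly this geometric input—rather than the formal handle bookkeeping that only delivers $\pi_1(W)=1$—that forces $\pi_1(\Sigma(K))=1$. Once that is established, Theorem~\ref{thm:double-cover-representation} shows $K$, and hence $\Lambda$, is unknotted, contradicting the hypothesis and completing the argument.
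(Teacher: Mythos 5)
Your setup matches the paper's: pass to branched double covers, convert births and saddles into $1$- and $2$-handles (equal in number), cap off the $S^3$ end, and observe that the resulting four-manifold $W^+$ with boundary $\Sigma(K)$ is simply connected. You are also right that this alone is too weak. But the step you then propose --- proving that $\pi_1(\Sigma(K))\to\pi_1(W^+)$ is injective by ``lifting Gordon's retraction to the branched double covers'' --- is exactly the part you do not carry out, and it is the entire content of the theorem. This is a genuine gap, not a routine verification: Gordon's ribbon-concordance lemma does not actually produce a retraction of the upper group onto the lower one (it gives surjectivity at the top end and injectivity at the bottom end, and the injectivity half is the hard half, requiring residual finiteness of the relevant groups); transplanting that injectivity to branched covers of the concordance is a substantial theorem in its own right, whose known proofs themselves route through the Gerstenhaber--Rothaus theorem. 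So as written, the proposal reduces the statement to an unproved assertion that is at least as deep as the statement itself.

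The paper's proof avoids injectivity entirely, and this is the idea your proposal is missing. First one checks, via the presentation of $H_1(\Sigma(K)\# n(S^1\times S^2))$ by the framing matrix of the surgery link, that the surgery coefficients are all $0$ and $H_1(\Sigma(K))=0$; this lets one attach $n$ three-handles and produce a \emph{contractible} handlebody $X$ with $\partial X=\Sigma(K)$, built from $B^4$ with $n$ $2$-handles and $n$ $3$-handles and no $1$-handles. Turning $X$ upside down presents the trivial group $\pi_1(X)$ as the result of adding $n$ generators and $n$ relations to $\pi_1(\Sigma(K))$. The theorem of Gerstenhaber and Rothaus \cite{gerstenhaber-rothaus} says that a group admitting a nontrivial homomorphism to a compact connected Lie group cannot be killed by adjoining equally many generators and relations, and Theorem~\ref{thm:double-cover-representation} (via Kronheimer--Mrowka \cite{km-excision}) supplies exactly such a homomorphism $\pi_1(\Sigma(K))\to SO(3)$ when $K$ is knotted. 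That contradiction finishes the proof with no need for $\pi_1(\Sigma(K))\to\pi_1(W^+)$ to be injective, which is precisely the obstacle on which your argument stalls.
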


\begin{proof}
Given such a concordance $\Lag$, we can take a front diagram for $\tilde{U}$
and perform a sequence of pinch moves, isotopies, and capping off
$\tb=-1$ unknots to produce a front diagram for $K$, where $K$ is the
smooth knot type of $\Lambda$.  Since $\Lag$ has Euler characteristic zero, there must be exactly as many pinch moves as there are capping moves; call this number $n$.  Moreover, we can postpone any capping moves until the end by taking $\tb=-1$ unknots $U$ which are about to be capped off and instead isotoping them far away from the rest of the diagram so that they no longer interact with any other components.  Thus we can suppose that there is a sequence of Legendrian isotopies and $n$ pinch moves which transforms a front diagram for $\tilde{U}$ into a front diagram for $\Lambda \sqcup nU$.

A pinch move can be performed taking a small ball $B^3$ with two unknotted arcs passing through it, and replacing those arcs with a different pair of unknotted arcs.  The branched double cover of $B^3$ over either pair of arcs is a solid torus, so pinch moves correspond to Dehn surgeries on the branched double cover.  It follows that there must be an $n$-component link $L = L_1 \cup \dots \cup L_n$ in $\Sigma(\tilde{U}) = S^3$ upon which some Dehn surgery produces $\Sigma(K \sqcup nU) = \Sigma(K) \# n(S^1\times S^2)$.  Since $H_1(\Sigma(K) \# n(S^1\times S^2)) = H_1(\Sigma(K)) \oplus \zz^n$ is presented by the $n\times n$ framing matrix of $L$, we must have $H_1(\Sigma(K)) = 0$ and the framing matrix must be identically zero.  In particular, we must perform $0$-surgery on each $L_i$.

We can now build a $4$-dimensional handlebody $X$ with boundary $\Sigma(K)$ by taking $B^4$, attaching $n$ $0$-framed $2$-handles to its boundary along $L$ to produce $\Sigma(K) \# n(S^1\times S^2)$ on the boundary, and then attaching $n$ $3$-handles to eliminate each of the $S^1 \times S^2$ summands.  Then $X$ does not have any $1$-handles, and an easy exercise shows that $X$ is contractible since $H_1(\Sigma(K))=0$.  Following an argument from \cite[Section I.3]{kirby-book}, we now turn $X$ upside down to construct it by attaching $n$ 1-handles, $n$ 2-handles, and a $4$-handle to $\Sigma(K)$, and thus we see that the trivial group can be constructed from $\pi_1(\Sigma(K))$ by adding $n$ generators and $n$ relations.  But this is ruled out by a theorem of Gerstenhaber and Rothaus \cite[Theorem 3]{gerstenhaber-rothaus}, since $\pi_1(\Sigma(K))$ admits a nontrivial map into the compact connected Lie group $SO(3)$ by Theorem~\ref{thm:double-cover-representation}, so the claimed decomposable concordance cannot exist.
\end{proof}

Since Legendrian representatives of the unknot are uniquely determined by their classical invariants $tb$ and $r$ \cite{eliashberg-fraser}, it follows that there can only be a decomposable Lagrangian concordance $\Lambda \prec \tilde{U}$ if $\Lambda$ is Legendrian isotopic to $\tilde{U}$.

One might be tempted to conjecture that all Lagrangian concordances
are decomposable, but we believe that this is not the case. If
$W \subset J^1(S^1)$ denotes the Legendrian Whitehead double pattern
$W_0$ of \cite[Figure~23]{ng-traynor}
and $U$ is the standard Legendrian unknot as usual, then
$S(U,W)$ is the Legendrian
right-handed trefoil $T$ with $tb=1$; see Figure~\ref{fig:wh-unknot}.
\begin{figure}[t]
\labellist
\large
\pinlabel $\leadsto$ at 86 15
\pinlabel $\leadsto$ at 185 15
\pinlabel $\leadsto$ at 285 15
\endlabellist
\begin{centering}
\includegraphics{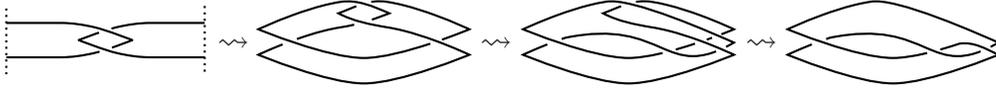}
\end{centering}
\caption{The satellite $S(U,W)$ is a right-handed trefoil with $tb=1$.}
\label{fig:wh-unknot}
\end{figure}
Letting $\Lambda$ be the Legendrian $\overline{9_{46}}$ knot of Example~\ref{ex:ruling-2copy-946}, for which $U \prec \Lambda$, we apply Theorem~\ref{thm:satellite} to construct a Lagrangian concordance $C$ from $T$ to $S(\Lambda,W)$; these knots are displayed in Figure~\ref{fig:indecomposable}.  Topologically, $S(\Lambda,W)$ is the positively clasped, $-1$-twisted Whitehead double of $\overline{9_{46}}$.
\begin{figure}[t]
\labellist
\large
\pinlabel $\prec$ at 128 62
\endlabellist
\begin{centering}
\includegraphics{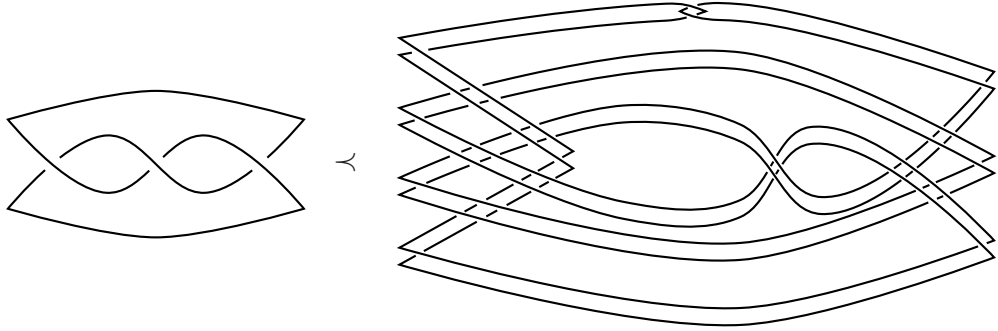}
\end{centering}
\caption{There is a Lagrangian concordance from $T$ to $S(\Lambda,W)$.}
\label{fig:indecomposable}
\end{figure}

\begin{conjecture}
\label{conj:indecomposable}
The Lagrangian concordance $C$ from $T$ to $S(\Lambda,W)$ is not decomposable.
\end{conjecture}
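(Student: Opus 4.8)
Because decomposability depends only on the underlying smooth cobordism together with its handle structure, it suffices to prove the stronger statement that there is \emph{no} decomposable Lagrangian concordance from $T$ to $S(\Lambda,W)$ whatsoever; this in particular rules out $C$. My plan is to extract a smooth obstruction. By the construction of Ekholm--Honda--K\'alm\'an \cite{ehk}, a decomposable concordance from $\Lambda_-$ to $\Lambda_+$ has an underlying smooth concordance whose projection to the $\R_t$ factor is a Morse function with only minima (the minimum cobordisms) and saddles (the saddle cobordisms), and no maxima. Thus such a concordance from $T$ to $S(\Lambda,W)$ would be a \emph{ribbon} concordance in the sense of Gordon, with the simpler knot $T$ at the bottom, and I would try to obstruct the existence of any such ribbon concordance.

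The first candidate obstruction is the theorem of Zemke: a ribbon concordance from $T$ to $S(\Lambda,W)$ induces a split injection $\cfki(T) \hookrightarrow \cfki(S(\Lambda,W))$, so that $\hfk(T)$ is a bigraded direct summand of $\hfk(S(\Lambda,W))$, and there are analogous statements in instanton homology (Daemi--Lidman--Miller--Vela-Vick) and in Khovanov homology. It would then suffice to show that $T$ fails to be a summand of $S(\Lambda,W)$ for one of these theories. The obstacle I expect to dominate the problem, however, is that the two knots agree on every readily computable invariant. Topologically $S(\Lambda,W)$ is the positive, $-1$-twisted Whitehead double of $\overline{9_{46}}$; since the Alexander polynomial of a twisted Whitehead double depends only on the twisting, $S(\Lambda,W)$ has the same Alexander polynomial as $T$. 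Both are genus $1$, and Hedden's computation of $\tau$ of Whitehead doubles, using $\tau(\overline{9_{46}}) = 0$ and $-1 < 2\tau(\overline{9_{46}})$, gives $\tau(S(\Lambda,W)) = 1 = \tau(T)$. Whitehead doubles typically have knot complex of the form ``staircase plus acyclic boxes,'' so I expect the length-one staircase $\cfki(T)$ to embed as a summand of $\cfki(S(\Lambda,W))$ and the Heegaard Floer obstruction to be insufficient; the real task is to find a refinement sensitive to the companion $\overline{9_{46}}$.

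For that refinement I would return to the branched double cover, in the spirit of Theorem~\ref{thm:unknot}. A ribbon concordance from $T$ to $S(\Lambda,W)$ with $n$ minima and $n$ saddles lifts to a smooth cobordism from $\Sigma(T) = L(3,1)$ to $\Sigma(S(\Lambda,W))$ obtained from $\big(L(3,1)\,\#\,n(S^1\times S^2)\big)\times[0,1]$ by attaching exactly $n$ two-handles, since the minima produce split unknot components contributing the $S^1\times S^2$ summands and the saddles lift to Dehn surgeries exactly as in the proof of Theorem~\ref{thm:unknot}. Because $W$ has winding number $0$, the companion core lifts to a two-component link in $L(3,1)$ and $\Sigma(S(\Lambda,W))$ is obtained from $L(3,1)$ by a satellite-type modification along that link; in particular it records the nontrivial knot $\overline{9_{46}}$, whereas $L(3,1)$ does not. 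The goal would be a gauge-theoretic invariant that is monotone under such a balanced two-handle cobordism but that separates these two $3$-manifolds.

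The principal difficulty is precisely that the coarse $\pi_1$-representation argument of Theorem~\ref{thm:unknot} cannot close the gap here: by Theorem~\ref{thm:double-cover-representation} both $\pi_1(L(3,1)) = \zz/3$ and $\pi_1(\Sigma(S(\Lambda,W)))$ carry nontrivial maps to $SO(3)$, so the Gerstenhaber--Rothaus obstruction \cite{gerstenhaber-rothaus} produces no contradiction, and one checks directly that the balanced handle attachment is consistent at the level of group presentations. What is needed instead is a finer invariant---for instance a suitably framed instanton homology following Kronheimer--Mrowka \cite{km-excision}, or a Heegaard Floer correction term---that detects the presence of $\overline{9_{46}}$ in $\Sigma(S(\Lambda,W))$ across a cobordism built from $n$ one- and $n$ two-handles. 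Producing and computing such an invariant is where I expect essentially all of the difficulty to reside, and is presumably why the statement remains only a conjecture.
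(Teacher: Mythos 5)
The statement you are trying to prove is Conjecture~\ref{conj:indecomposable}; the paper gives no proof of it, and your proposal does not supply one either. What you have written is a (sensible) research plan whose decisive step is missing, as you yourself concede in the final sentence. Concretely: you reduce the problem to obstructing a ribbon concordance from $T$ to $S(\Lambda,W)$, which is a legitimate reduction --- a decomposable concordance has only index-$0$ and index-$1$ critical points, so it is ribbon in Gordon's sense, and ruling out all decomposable concordances would in particular rule out $C$ being decomposable. But every candidate obstruction you then propose is either expected to vanish or is left uncomputed. You argue (plausibly, via Hedden's computation of $\tau$ and the Alexander polynomial of twisted Whitehead doubles) that the knot Floer summand obstruction of Zemke-type is likely insufficient; you correctly observe that the $SO(3)$-representation argument of Theorem~\ref{thm:unknot} cannot work because $\pi_1(L(3,1))=\zz/3$ already admits nontrivial maps to $SO(3)$, so Gerstenhaber--Rothaus \cite{gerstenhaber-rothaus} gives nothing; and the ``finer invariant'' of the branched double covers that you ask for is never produced. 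A proof requires exhibiting at least one invariant that actually obstructs, and none is exhibited.

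Two smaller points worth flagging. First, you have silently replaced the conjecture with the stronger assertion that \emph{no} decomposable concordance from $T$ to $S(\Lambda,W)$ exists. That is a sufficient condition and is almost certainly the intended content (the authors' remark after the conjecture suggests they expect the phenomenon to be about the pair of knots, not the particular cylinder $C$), but you should say explicitly that you are proving more than asked, and be aware that the stronger statement could in principle be false while the conjecture is true. Second, your opening sentence --- that decomposability ``depends only on the underlying smooth cobordism together with its handle structure'' --- is not a justification for the reduction and is not literally correct as stated: decomposability is a property of the Lagrangian $C$, and the correct logic is simply the implication ``decomposable $\Rightarrow$ ribbon,'' which you do use correctly afterward. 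None of this changes the verdict: the conjecture remains open in the paper, and it remains open in your write-up.
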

\noindent We believe that this conjecture may hold more generally if $\Lambda$ is replaced with any other topologically nontrivial Legendrian knot with $U \prec \Lambda$. 
%!TEX root = concordances.tex

\section{A Census of Lagrangian Slice Knots}
\label{sec:census}

Recall that a Legendrian knot $\Lambda$ is said to be \emph{Lagrangian
  slice} if it bounds a Legendrian disk in $B^4$, or equivalently if
$U \prec \Lambda$ where $U$ is the standard unknot.
In this section we provide a complete list of nontrivial topological
knot types through 14 crossings that have Legendrian representatives
that are Lagrangian slice.
Our data support the following conjecture:

\begin{conjecture}
If $K$ is smoothly slice and $\maxtb(K)=-1$, then $K$ bounds a Lagrangian disk in $B^4$.
\label{conj:Lagr-slice}
\end{conjecture}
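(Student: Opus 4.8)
The plan is to produce a \emph{decomposable} Lagrangian disk, since by \cite{ehk} these are the only Lagrangian fillings we currently know how to build explicitly; equivalently, to exhibit a sequence of the two elementary moves of \cite{ehk}—minimum cobordisms and pinch moves—that assembles a chosen Legendrian representative $\Lambda$ of $K$ out of the empty set. First I would fix the representative. Since $\maxtb(K)=-1$ there is some $\Lambda$ with $\tb(\Lambda)=-1$, and because $K$ is smoothly slice the slice--Bennequin inequality $\tb(\Lambda)+|r(\Lambda)| \le 2g_4(K)-1 = -1$ forces $r(\Lambda)=0$. This is exactly the numerology a Lagrangian disk demands, since a disk filling has $\tb(\Lambda)=-\chi(D)=-1$ and $r(\Lambda)=0$. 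So $\Lambda$ is the natural candidate, and the content of the conjecture is that this $\Lambda$—or some $\tb$-maximizing representative—actually bounds.

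Next I would translate the filling problem into combinatorics. A decomposable filling read from the bottom up is a collection of births of standard $\tb=-1$ unknots, merged by saddle (pinch) moves into a single connected, genus-zero surface with boundary $\Lambda$; the count $(\#\text{births})-(\#\text{saddles})=1$ is automatic for a disk. The smooth shadow of such a movie is precisely a \emph{ribbon} disk for $K$: a slice disk on which the radial Morse function has only minima and saddles and no maxima, matching the absence of a $2$-handle move in the exact Lagrangian setting. Thus the decomposable route reduces to two steps: (i) $K$ bounds a ribbon disk, and (ii) the bands of that ribbon disk can be realized as Legendrian pinch moves on a front for $\Lambda$ without stabilizing.

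For step (i) one must invoke ribbon-ness of $K$, and this is the crux and the reason the statement is only a conjecture: the slice--ribbon problem is open, so we cannot assert a priori that every smoothly slice $K$ is ribbon. For the census through $14$ crossings this gap disappears, as every slice knot in that range is known to be ribbon; this is precisely why the finite case can be completed by hand while the general statement is left open. For step (ii), given a ribbon movie I would Legendrianize it by placing each birth as a standard unknot and isotoping the two feet of each band into the local front model of a pinch. The delicate point is $\tb$ bookkeeping: a generic ribbon band may only become pinch-compatible after stabilizing the front, and any stabilization destroys the $\tb=-1$ disk condition. The hypothesis $\maxtb(K)=-1$ is exactly the statement, at the level of classical invariants, that no such forced stabilization occurs, but promoting this into an actual stabilization-free Legendrian movie, uniformly over all $K$, is the main obstacle I would expect to face.

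Finally, even setting ribbon-ness aside, one should expect that some Lagrangian-slice knots admit only \emph{non}-decomposable fillings (compare Conjecture~\ref{conj:indecomposable}), and at present there is no general machinery for constructing indecomposable Lagrangian disks. A complete proof would therefore likely require either a resolution of slice--ribbon for the $\maxtb=-1$ class together with the stabilization-free realization above, or an entirely new non-decomposable construction technique; absent these, the supporting evidence is necessarily case-by-case, as in the $14$-crossing census.
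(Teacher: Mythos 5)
This statement is a conjecture that the paper explicitly leaves open: the authors offer only supporting evidence, namely a verification through $14$ crossings by exhibiting, for each of the $23$ candidate knot types, an explicit decomposable filling given by pinch moves on a grid diagram (Table~\ref{table:census-diagrams}). You correctly recognize that no proof is possible with current technology, and your outline of the strategy --- fix a $\tb=-1$, $r=0$ representative via slice--Bennequin, reduce a decomposable disk to a ribbon movie of births and saddles with $\#\text{births}-\#\text{saddles}=1$, and then realize the bands as stabilization-free Legendrian pinch moves --- is exactly the mechanism behind the paper's case-by-case census, and your identification of the two obstacles (slice--ribbon, and the possible necessity of indecomposable fillings as in Conjecture~\ref{conj:indecomposable}) matches the reasons the general statement remains conjectural. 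The only minor point of divergence is that the paper does not route its finite verification through a ribbon-ness citation; it simply displays the pinch moves directly for each knot, which sidesteps your step (i) in the cases at hand.
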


Note that the converse of this conjecture is true.

Our results are summarized in Table~\ref{table:census-diagrams}, which
provides a list of smoothly slice knots $K$ with up to $14$ crossings
such that $\maxtb(K) = -1$, along with a Legendrian representative
that bounds a Lagrangian disk; this table verifies Conjecture~\ref{conj:Lagr-slice} when $K$ has up to $14$ crossings.

In Section~\ref{ssec:alternating} and~\ref{ssec:nonalt}, we show
that nontrivial alternating knots cannot be Lagrangian slice and then
rule out all but $23$ nonalternating knot types of crossing numbers $\leq
14$.
In Section~\ref{ssec:table}, for each of the remaining $23$ knot types,
Table~\ref{table:census-diagrams} gives a Legendrian representative
$\Lambda$ with $U \prec \Lambda$, completing the census through $14$
crossings. We conclude by discussing in Section~\ref{ssec:nonrev} the
extent to which we can establish $\Lambda \not\prec U$ for these knots.

\subsection{Alternating knots}
\label{ssec:alternating}

We will first show that no Lagrangian slice knot can be alternating unless it is an unknot, using the following description of $\maxtb(K)$ for alternating $K$.

\begin{proposition}[{\cite[p.\ 1646]{ng-khovanov}}]
\label{prop:compute-maxtb}
Let $L$ be an oriented, alternating, nonsplit link, and $D$ a reduced alternating diagram for $L$ with $n_-(D)$ negative crossings.  Then
\[ \maxtb(L) = \sigma(L) - n_-(D) - 1, \]
where $\sigma(L)$ denotes signature, normalized so that the right-handed trefoil has signature $+2$.
\end{proposition}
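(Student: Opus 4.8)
The plan is to sandwich $\maxtb(L)$ between a diagrammatic upper bound coming from a polynomial estimate and a matching lower bound coming from an explicit Legendrian representative, and to verify that both sides equal $\sigma(L) - n_-(D) - 1$. Concretely, I would reduce the proposition to two assertions: (A) for a reduced alternating diagram the Kauffman polynomial bound is \emph{sharp}, i.e.\ $\maxtb(L) = -\maxdeg_a F_L(a,z) - 1$; and (B) the extreme degree satisfies $\maxdeg_a F_L(a,z) = n_-(D) - \sigma(L)$. Granting these, the identity $\maxtb(L) = -(n_-(D) - \sigma(L)) - 1 = \sigma(L) - n_-(D) - 1$ is immediate, and one can sanity-check it on the two trefoils (where $n_-=0,\sigma=2$ and $n_-=3,\sigma=-2$ give $\maxtb=1$ and $\maxtb=-6$) and on the figure-eight knot.

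For the upper bound in (A) I would invoke the Kauffman (Dubrovnik) estimate already recorded in this section, $\maxdeg_a F_L \le -\maxtb(L) - 1$. The real content is the reverse inequality, i.e.\ \emph{realizing} a Legendrian representative whose Thurston--Bennequin number equals $-\maxdeg_a F_L - 1$. Here I would use that a reduced alternating diagram is \emph{adequate}, which is exactly the condition under which the Kauffman bound is known to be sharp: one constructs a Legendrian front directly from the diagram and verifies, using adequacy to rule out cancellation of the extreme $a$-degree term in the Kauffman state sum, that the $\tb$ of this front attains the degree bound. The hypotheses ``reduced alternating'' and ``nonsplit'' enter precisely here.

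For (B) I would compute $\maxdeg_a F_L$ combinatorially. By Thistlethwaite's analysis of the Kauffman polynomial of adequate links, the extreme $a$-degree is governed by the all-$A$ and all-$B$ Kauffman states of $D$ (their loop counts together with the writhe normalization $a^{-w(D)}$), so $\maxdeg_a F_L$ becomes an explicit function of the diagram. It then remains to identify this quantity with $n_-(D) - \sigma(L)$, for which I would apply a classical diagrammatic formula for the signature of an alternating link via the Gordon--Litherland (Goeritz) form, which is definite for an alternating diagram; this writes $\sigma(L)$ in terms of the checkerboard region counts and $n_\pm(D)$. Matching the two expressions is then bookkeeping in the crossing and region counts.

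The step I expect to be the main obstacle is the sharpness in (A): producing, from the reduced alternating diagram, a maximal-$\tb$ front whose $\tb$ meets the Kauffman bound. The naive Legendrianization already fails to give the correct count once negative crossings are present (for instance it overestimates $\maxtb$ of the left-handed trefoil), so one must exploit the alternating/adequate structure to control both the realized crossing signs and the forced cusps simultaneously. The degree-versus-signature identity (B), while demanding care with conventions, is routine once Thistlethwaite's and Gordon--Litherland's formulas are in hand. I note that this is essentially Ng's argument in \cite{ng-khovanov}, where the upper bound is instead phrased through Khovanov homology, which for alternating links is thin by Lee's theorem and hence again pins the extreme grading to $\sigma$; the Kauffman version above is an equivalent route that fits the degree bounds already set up here.
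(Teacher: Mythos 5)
This proposition is not proved in the paper at all: it is imported verbatim from \cite[p.~1646]{ng-khovanov}, so the only meaningful comparison is with Ng's cited argument. That argument obtains the upper bound from the Khovanov-homology estimate $\maxtb(L) \leq \mindeg_q Kh_L(q,t/q)$ together with Lee's theorem that nonsplit alternating links are Khovanov-thin (the support lies on two diagonals pinned by $\sigma(L)$, with homological gradings running over $[-n_-(D),n_+(D)]$, which is where $n_-(D)$ enters), and obtains the matching lower bound from an explicit Legendrian front built out of the reduced alternating diagram. Your overall architecture --- a polynomial upper bound plus an explicit realization, glued by a diagrammatic identity --- is the same, and your step (B), identifying $\maxdeg_a F_L(a,z)$ with $n_-(D)-\sigma(L)$ via Thistlethwaite's extreme-degree computation for adequate diagrams and the Gordon--Litherland description of $\sigma$, is correct (and checks out on your test cases). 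Using the Kauffman bound in place of the Khovanov bound is a legitimate variant; what the Khovanov route buys is that Lee's theorem hands you the $\sigma$-dependence directly, with no need for the Thistlethwaite/Goeritz bookkeeping.

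The genuine gap is in your step (A). The assertion that adequacy ``is exactly the condition under which the Kauffman bound is known to be sharp'' is not a theorem: Thistlethwaite's adequacy results control the \emph{degrees} of $F_L$ (no cancellation of extreme terms in the state sum), but they say nothing about the existence of a Legendrian representative attaining $\tb = -\maxdeg_a F_L - 1$, and indeed the Kauffman bound is known to fail to be sharp for various non-alternating knots. Sharpness for alternating links is precisely the hard half of the proposition, and it must be established by actually constructing a maximal-$\tb$ front from the reduced alternating diagram (as Ng does), or equivalently by producing a Legendrian representative with an ungraded normal ruling and invoking Rutherford's theorem. As written, your proposal defers exactly this step while simultaneously citing it as known, which makes the argument circular at its crux; everything else is sound, but until you supply that construction the proof is incomplete.
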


\begin{theorem}
\label{thm:alternating-positive}
An alternating knot $K$ bounds an oriented Lagrangian surface $\Sigma$ in the standard symplectic $4$-ball if and only if $K$ is also a positive knot, in which case its Seifert, smooth, and topological 4-ball genera and the genus of $\Sigma$ are all equal to $\frac{\sigma(K)}{2}$.
\end{theorem}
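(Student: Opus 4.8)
The plan is to translate the geometric condition of bounding a Lagrangian surface into a statement about the maximal Thurston--Bennequin number, and then to read off positivity and all four genera from Proposition~\ref{prop:compute-maxtb}. The basic input is Chantraine's adjunction relation for Lagrangian cobordisms, $\tb(\Lambda_+) - \tb(\Lambda_-) = -\chi$, applied to a filling: if a Legendrian representative $\Lambda$ of $K$ bounds an oriented Lagrangian surface $\Sigma \subset B^4$ of genus $g$, then viewing $\Sigma$ as a cobordism from the empty set gives $\tb(\Lambda) = -\chi(\Sigma) = 2g-1$ (and $r(\Lambda)=0$).

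First I would prove the ``only if'' direction. Suppose the alternating knot $K$ bounds an oriented Lagrangian surface $\Sigma$ of genus $g$ with Legendrian boundary $\Lambda$. Then $\maxtb(K) \ge \tb(\Lambda) = 2g-1$, while Rudolph's slice--Bennequin inequality gives $\maxtb(K) \le 2g_4(K)-1$ for the smooth $4$-genus $g_4$; since $\Sigma$ is a smooth surface we also have $g \ge g_4(K)$. Chaining these, $2g_4(K)-1 \ge \maxtb(K) \ge 2g-1 \ge 2g_4(K)-1$, so all are equalities: $g = g_4(K)$ and $\maxtb(K) = 2g_4(K)-1$. Applying Proposition~\ref{prop:compute-maxtb} to a reduced alternating diagram $D$ yields $\maxtb(K) = \sigma(K) - n_-(D) - 1$, hence $\sigma(K) - n_-(D) = 2g_4(K)$. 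Combining this with the classical signature bound $\sigma(K) \le |\sigma(K)| \le 2g_4(K)$ forces $n_-(D) \le 0$, and since $n_-(D) \ge 0$ we get $n_-(D)=0$. Thus $D$ is a positive diagram, $K$ is positive, and $\sigma(K) = 2g_4(K)$.

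For the ``if'' direction, suppose $K$ is positive, fix a positive diagram $D$, and build an explicit oriented Lagrangian filling by the decomposable-cobordism technique of Ekholm--Honda--K\'alm\'an. After realizing a Legendrian front from $D$, I would resolve each (positive) crossing by an oriented pinch move, a Lagrangian saddle cobordism, reducing $D$ to its Seifert state: a Legendrian unlink of $s(D)$ unknots, each capped off by a Lagrangian minimum disk. The resulting surface is an oriented Lagrangian filling of $K$. I expect this construction to be the main obstacle, since one must check that the pinch moves at all crossings of a \emph{general} positive diagram (not merely a positive braid closure) can be ordered and realized simultaneously as elementary Lagrangian pieces, and that the capping disks assemble into a single smoothly embedded oriented surface.

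Finally, the genus statement follows by sandwiching. The topological signature bound $\tfrac{1}{2}|\sigma(K)| \le g_4^{\mathrm{top}}(K) \le g_4(K)$, together with $g_4(K) = \tfrac{1}{2}\sigma(K)$, shows $g_4^{\mathrm{top}}(K) = g_4(K) = \tfrac{\sigma(K)}{2}$, and by the first paragraph this common value is also the genus of every oriented Lagrangian surface bounding $K$. For the Seifert genus I would invoke the standard lower bound $\maxtb(K) \ge w(D) - s(D)$ coming from the Legendrian realization of $D$; since $D$ is positive its writhe is $w(D) = c(D)$, so $\maxtb(K) \ge c(D) - s(D) = 2g_D - 1$, where $g_D = \tfrac{1}{2}(c(D)-s(D)+1)$ is the genus of the Seifert surface of $D$. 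As $g_3(K) \le g_D$ and $\maxtb(K) = 2g_4(K)-1$, this gives $g_4(K) \ge g_3(K)$, and with $g_4(K) \le g_3(K)$ we conclude $g_3(K) = g_4(K) = \tfrac{\sigma(K)}{2}$. As a byproduct, the implication positive $\Rightarrow$ Lagrangian $\Rightarrow n_-(D)=0$ recovers Nakamura's theorem that a reduced alternating diagram of a positive knot has only positive crossings.
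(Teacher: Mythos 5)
Your ``only if'' direction is essentially the paper's argument: Chantraine's relations (which you correctly re-derive by sandwiching $\tb(\Lambda)=2g-1$ between the slice--Bennequin bound and $g\geq g_s(K)$), then Proposition~\ref{prop:compute-maxtb}, then the signature bound $|\sigma(K)|\leq 2g_4^{\mathrm{top}}(K)\leq 2g_s(K)$ forcing $n_-(D)=0$ and the equality of the $4$-ball genera with $\sigma(K)/2$. The two places you diverge are worth noting. First, for the ``if'' direction the paper simply cites Hayden--Sabloff \cite{hayden-sabloff}; your sketch of pinching every crossing of an arbitrary positive diagram down to its Seifert state is \emph{not} a complete proof as written, and you are right to flag it as the main obstacle --- a general positive diagram need not admit a Legendrian front in which all crossings can be realized and pinched by elementary Lagrangian saddles (this is exactly the delicate point in \cite{hayden-sabloff}, whose argument is more involved than a crossing-by-crossing resolution). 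The clean fix is to quote that theorem rather than reprove it. Second, for the Seifert genus the paper cites \cite{rasmussen} (positivity implies $g=g_s$), whereas you derive $g(K)\leq g_D\leq g_s(K)$ from the Bennequin-type bound $\maxtb(K)\geq w(D)-s(D)$ applied to the positive diagram; this is a valid, pleasantly self-contained alternative that stays entirely within Thurston--Bennequin estimates. With the Hayden--Sabloff citation inserted, your proof is correct and recovers Nakamura's theorem exactly as the paper does.
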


\begin{proof}
The ``if'' direction is due to Hayden and Sabloff \cite{hayden-sabloff}, who showed that all positive links bound exact Lagrangian surfaces.  Conversely, suppose that $K$ bounds a Lagrangian surface $\Sigma$.  Then Chantraine \cite{chantraine-concordance} showed that $\maxtb(K)=2g(\Sigma)-1$ and $g_s(K)=g(\Sigma)$, where $g_s$ denotes the smooth slice genus.  Now Proposition~\ref{prop:compute-maxtb} tells us that
\[ 2g(\Sigma)-1 = \maxtb(K) = \sigma(K) - n_-(D) - 1 \]
for a reduced alternating diagram $D$ of $K$, hence $\sigma(K) = 2g_s(K) + n_-(D)$.  We apply Murasugi's bound $|\sigma(K)| \leq 2g_4(K)$ \cite{murasugi}, with $g_4(K)$ the topological $4$-ball genus, to get
\[ 2g_s(K) + n_-(D) = \sigma(K) \leq 2g_4(K) \leq 2g_s(K) \]
and hence each inequality is an equality, implying that $n_-(D)=0$ and $g_s(K) = g_4(K) = g(\Sigma)$ as desired.  The claim that $g(K) = g_s(K)$ follows because $K$ is positive \cite{rasmussen}.
\end{proof}

This gives a new, contact geometric proof of the result of Nakamura
\cite{nakamura} that a reduced alternating diagram of a positive knot
has no negative crossings.

\begin{corollary}
Nontrivial alternating knots are not Lagrangian slice.
\label{cor:alt}
\end{corollary}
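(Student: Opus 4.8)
The plan is to derive this immediately from Theorem~\ref{thm:alternating-positive}, using the observation that being Lagrangian slice is precisely the genus-zero case of bounding an oriented Lagrangian surface. First I would suppose, for contradiction, that $K$ is a nontrivial alternating knot that is Lagrangian slice. By definition this means $U \prec \Lambda$ for some Legendrian representative $\Lambda$ of $K$, so $K$ bounds a Lagrangian disk $\Sigma$ in the standard symplectic $4$-ball. A disk is an oriented Lagrangian surface of genus $g(\Sigma) = 0$, so the hypotheses of Theorem~\ref{thm:alternating-positive} are satisfied.

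Next I would invoke the conclusion of that theorem: since $K$ is alternating and bounds the oriented Lagrangian surface $\Sigma$, the knot $K$ is positive and its Seifert genus satisfies $g(K) = g(\Sigma) = \frac{\sigma(K)}{2}$. Substituting $g(\Sigma) = 0$ forces $g(K) = 0$. The final step is to note that a knot of Seifert genus zero is necessarily the unknot, which contradicts our assumption that $K$ is nontrivial. Hence no nontrivial alternating knot can be Lagrangian slice.

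There is essentially no substantive obstacle here, as the statement is a direct specialization of Theorem~\ref{thm:alternating-positive} to the case of a genus-zero surface; the only point requiring a moment's care is the identification of ``Lagrangian slice'' with ``bounds a genus-zero oriented Lagrangian surface,'' which follows directly from the definition recalled at the start of Section~\ref{sec:census}. One could equivalently phrase the contradiction through the signature, observing that $\sigma(K) = 2g(\Sigma) = 0$ is incompatible with $K$ being a nontrivial positive knot (which has strictly positive signature in the chosen normalization), but the genus formulation is the most economical route.
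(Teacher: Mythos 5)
Your proof is correct and is essentially identical to the paper's one-line argument: apply Theorem~\ref{thm:alternating-positive} with $\Sigma$ a disk to conclude $g(K)=0$, hence $K$ is the unknot. The extra care you take in identifying ``Lagrangian slice'' with bounding a genus-zero oriented Lagrangian surface is fine but not a departure from the paper's route.
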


\begin{proof}
By Theorem~\ref{thm:alternating-positive}, if $K$ is alternating and
Lagrangian slice, then $g(K)=0$.
\end{proof}

%%%%%%%%%%%%%%%%%%%%%%%%%

\subsection{Knots with at most 14 crossings}
\label{ssec:nonalt}

By Corollary~\ref{cor:alt}, to enumerate Lagrangian slice knots, it
suffices to restrict to nonalternating knots. Here we narrow the list
of candidates with up to $14$ crossings to a list of $23$, which we
then show are all Lagrangian slice in
Table~\ref{table:census-diagrams} in Section~\ref{ssec:table}.

\subsubsection{Up to 12 crossings}

If a Legendrian knot $\Lambda$ of smooth type $K$ bounds a Lagrangian disk, then it must have $\tb=-1$ \cite{chantraine-concordance} and be smoothly slice, and the latter implies $\maxtb(K) = -1$ by the slice-Bennequin inequality $\tb(\Lambda)+|r(\Lambda)| \leq 2g_s(K)-1$ \cite{rudolph-slice-bennequin}.  According to KnotInfo \cite{knotinfo}, there are exactly six nontrivial knot types of at most 12 crossings which are smoothly slice and have $\maxtb=-1$, namely
\[ \overline{9_{46}}, \overline{10_{140}}, 11n_{139}, 12n_{582}, \overline{12n_{768}}, 12n_{838}. \]
all of which are Lagrangian slice.

\begin{remark}
A note on chirality: We use $\overline{K}$ to denote the mirror of
$K$. There is some discrepancy in the literature over which of each
mirror pair is specified by a particular numbered knot. Our
conventions align with the \texttt{KnotTheory\`{}} package, available from
the Knot Atlas \cite{knotatlas}, which provides the Rolfsen knot
tables for knots through 10 crossings and the Hoste--Thistlethwaite
enumeration for 11 crossings and above. We note that this sometimes
differs from KnotInfo; for instance, by the
Thurston--Bennequin data in KnotInfo, $9_{46}$ rather than
$\overline{9_{46}}$ has $\maxtb=-1$.
\label{rmk:chirality}
It may be helpful to note that the Lagrangian slice knots which we have labeled $\overline{9_{46}},\overline{10_{140}}, 11n_{139}, 12n_{582}$ are unambiguously described as the $P(-m,-3,3)$ pretzel knots for $m=3,4,5,6$.
\end{remark}

%%%%%%%%%%%%%%%%%%%%%%%%%

\subsubsection{13- and 14-crossing knots}

We searched in \texttt{KnotTheory\`{}} for nonalternating 13- and 14-crossing knots satisfying the conditions of the following lemma.

\begin{lemma}
\label{lem:lagrangian-slice-criteria}
Suppose that $K$ admits a Legendrian representative $\Lambda$ which is Lagrangian slice.  If $\Delta_K(t)$, $P_K(a,z)$, $F_K(a,z)$, and $Kh_K(q,t)$ denote the Alexander, HOMFLY-PT, and Kauffman (Dubrovnik) polynomials of $K$ and the Poincar{\'e} polynomial of its Khovanov homology over $\Q$, respectively, then:
\begin{enumerate}
\item \label{item:top-slice} $\det(K)$ is a perfect square, the signature $\sigma(K)$ is zero, and $\Delta_K(t) = f(t)f(t^{-1})$ for some polynomial $f$.
\item \label{item:homfly-kauffman-bounds-eq} We have $\maxdeg_a P_K(a,z) = \maxdeg_a F_K(a,z) = 0$.
\item \label{item:ruling-polynomials-positive} If $p(z)$ and $f(z)$ denote the $a^0$-coefficients of $P_K(a,z)$ and $F_K(a,z)$, then $f(z) \geq p(z) \geq 0$, i.e.\ the coefficients of $f-p$ and of $p$ are all nonnegative.
\item \label{item:weak-khovanov--1} We have $\mindeg_q Kh_K(q,t/q) \geq -1$.
\item \label{item:s-invariant} If the Khovanov homology of $K$ has width at most $3$, then $Kh_K(q,-q^{-4}) = q+q^{-1}$.
\end{enumerate}
\end{lemma}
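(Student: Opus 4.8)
The plan is to distill the Lagrangian slice hypothesis into a handful of numerical inputs and then check the five conditions one at a time. Since $\Lambda$ is Lagrangian slice we have $U \prec \Lambda$, so Chantraine's cobordism inequalities \cite{chantraine-concordance} give $\tb(\Lambda)=-1$ and $g_s(K)=0$ (i.e.\ $K$ is smoothly slice); together with the slice--Bennequin inequality \cite{rudolph-slice-bennequin} this yields $\maxtb(K)=-1$ and $r(\Lambda)=0$. The concordance also induces an augmentation $\mathcal{A}_\Lambda \to \mathcal{A}_U \cong \zt$ by Proposition~\ref{prop:ehk}, so $\Lambda$ carries a graded normal ruling, and because $r(\Lambda)=0$ this ruling is $d$-graded for every $d$. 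These are the only facts I will use.

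Condition~\ref{item:top-slice} is classical slice theory applied to $K$: Murasugi's bound $|\sigma(K)|\le 2g_4(K)\le 2g_s(K)=0$ \cite{murasugi} forces $\sigma(K)=0$, while the Fox--Milnor theorem gives $\Delta_K(t)\doteq f(t)f(t^{-1})$, whence $\det(K)=|\Delta_K(-1)|=|f(-1)|^2$ is a perfect square. For condition~\ref{item:homfly-kauffman-bounds-eq}, the Morton--Franks--Williams and Kauffman degree bounds \cite{franks-williams,morton,rudolph-kauffman} give $\maxdeg_a P_K,\ \maxdeg_a F_K \le -\maxtb(K)-1=0$; Rutherford's theorem \cite{rutherford} identifies the $a^0$-coefficients of $P_K$ and $F_K$ with the $2$-graded and ungraded ruling polynomials $R^2_\Lambda(z)$ and $R^1_\Lambda(z)$, which are nonzero because of the ruling found above, so the maximal $a$-degrees are exactly $0$. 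Condition~\ref{item:ruling-polynomials-positive} then follows from the same identification: any ruling polynomial has nonnegative coefficients, so $p=R^2_\Lambda\ge 0$, and every $2$-graded ruling is in particular ungraded, so $f-p=R^1_\Lambda-R^2_\Lambda$ again enumerates a set of rulings with nonnegative coefficients, giving $f\ge p$.

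The two Khovanov conditions carry the real content. Condition~\ref{item:weak-khovanov--1} is immediate from Ng's Khovanov bound on maximal Thurston--Bennequin number \cite{ng-khovanov}, namely $\maxtb(K)\le \mindeg_q Kh_K(q,t/q)$, since $\maxtb(K)=-1$. For condition~\ref{item:s-invariant} I would run the Lee spectral sequence from $Kh_K$ (over $\Q$) to Lee homology. Writing $\delta=j-2i$ for the diagonal grading, the page-$r$ differential raises $i$ by $1$ and $j$ by $4r$, hence raises $\delta$ by $4r-2$; if the homological width is at most $3$ then the support of $Kh_K$ spans a $\delta$-range of at most $4$, so every $d_r$ with $r\ge 2$ vanishes and the sequence collapses after the knight-move differential $d_1\colon (i,j)\mapsto (i+1,j+4)$. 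The substitution $t\mapsto -q^{-4}$ weights a generator in bidegree $(i,j)$ by $(-1)^i q^{j-4i}$, and the two ends of a $d_1$-arrow carry opposite weights $\pm q^{j-4i}$; hence this weighted count is preserved on passing to $d_1$-homology, so $Kh_K(q,-q^{-4})$ equals the corresponding count over the Lee homology. The Lee homology of a knot has rank $2$, concentrated in homological degree $0$ at quantum filtration levels $s(K)\pm 1$, and $s(K)=0$ by smooth sliceness \cite{rasmussen}; the two surviving classes therefore contribute $q^{1}+q^{-1}$.

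I expect condition~\ref{item:s-invariant} to be the crux, since it is the one place an external structural theorem must be deployed by hand rather than quoted wholesale. The two points requiring genuine care are that the width hypothesis really annihilates all higher differentials---this uses both the bidegree of $d_r$ and the fact that the occupied diagonals of a knot are spaced by $2$---and that the weighting $(-1)^i q^{j-4i}$ is a chain-level invariant of $d_1$, which is what licenses transporting the alternating count from $Kh_K$ to its Lee homology. Everything else is bookkeeping layered on top of standard slice obstructions and the ruling- and Khovanov-theoretic Thurston--Bennequin bounds already in the literature.
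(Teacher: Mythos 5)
Your proposal is correct and follows essentially the same route as the paper: Chantraine plus slice--Bennequin for the numerical inputs, Fox--Milnor and Murasugi for item~(1), the Rutherford/degree-bound argument for items~(2) and~(3), and Ng's Khovanov bound for item~(4). The only divergence is item~(5), where the paper simply quotes Rasmussen's Proposition~5.3 (thin knots satisfy $Kh_K(q,t)=q^{s(K)}(q+q^{-1})+(1+tq^4)Q_K$) and substitutes $t=-q^{-4}$, whereas you re-derive that structural statement via the Lee spectral sequence; your derivation is sound, but it is a re-proof of the cited result rather than a different argument.
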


\begin{proof}
We know that $K$ must be smoothly slice, that $\Lambda$ admits a graded ruling, and that $\maxtb(K) = \tb(\Lambda) = -1$.  Thus \eqref{item:top-slice} follows from $K$ being topologically slice, and in particular $\Delta_K(t)=f(t)f(t^{-1})$ is the Fox--Milnor condition \cite{fox-milnor} and $\det(K)=f(-1)^2$.  Item \eqref{item:homfly-kauffman-bounds-eq} follows from achieving equality in the HOMFLY-PT \cite{franks-williams, morton} and Kauffman \cite{rudolph-kauffman} bounds on $\maxtb(K)$, namely $\maxtb(K) \leq -\maxdeg_a P_K(a,z)-1$ and likewise for $F_K(a,z)$, because Rutherford \cite{rutherford} showed that this equality follows from $\Lambda$ admitting a ruling.  Indeed, the leading coefficients $p(z)$ and $f(z)$ are then the 2-graded and ungraded ruling polynomials of $\Lambda$, and every 2-graded ruling is an ungraded ruling, so \eqref{item:ruling-polynomials-positive} is an immediate consequence.

Item \eqref{item:weak-khovanov--1} is the weak Khovanov bound $\maxtb(K) \leq \mindeg_q Kh_K(q,t/q)$ of \cite{ng-khovanov}.  Finally, we observe that \cite[Proposition~5.3]{rasmussen} says that knots of width at most $3$ satisfy
\[ Kh_K(q,t) = q^{s(K)}(q+q^{-1}) + (1+tq^4)Q_K \]
for some polynomial $Q_K$, and $s(K)=0$ since $K$ is smoothly slice, which implies \eqref{item:s-invariant}.
\end{proof}

There are six 13-crossing knot types satisfying all of the conditions of Lemma~\ref{lem:lagrangian-slice-criteria}, namely
\[ \overline{13n_{579}}, \overline{13n_{3158}}, 13n_{3523}, \overline{13n_{4236}}, 13n_{4514}, \overline{13n_{4659}}; \]
and 17 such knot types with 14 crossings, namely $14n_m$ for $m$ in
\begin{align*}
\overline{{2459}}, {2601}, 8091, \overline{{8579}}, \overline{9271}, \overline{{12406}}, \overline{{14251}}, 14799, 15489, \\
\overline{{15581}}, 17376, {18212}, {21563}, \overline{{22150}}, \overline{{22789}}, \overline{{24246}}, {25967}.
\end{align*}
Six of these knot types actually have $\maxtb(K) < -1$; we prove this by noting that they do not have arc index at most 10 \cite{jin-10} or 11 \cite{jin-park} and using Gridlink \cite{culler-gridlink} to find grid diagrams of complexity 12 for each of them, which must then be minimal.  We then apply Dynnikov and Prasolov's theorem that grid diagrams which minimize complexity maximize the Thurston--Bennequin invariant within a given knot type \cite{dynnikov-prasolov}. Specifically, the knot types
\[ \overline{13n_{4236}}, 14n_{8091}, \overline{14n_{9271}}, 14n_{14799}, 14n_{15489}, 14n_{17376} \]
all have arc index $12$, and in each case we find that $\maxtb(K) = -2$.

The remaining knot types are
\begin{align*}
& \overline{13n_{579}}, \overline{13n_{3158}}, 13n_{3523}, 13n_{4514}, \overline{13n_{4659}}, \\
& \overline{14n_{2459}}, 14n_{2601}, \overline{14n_{8579}}, \overline{14n_{12406}}, \overline{14n_{14251}}, \overline{14n_{15581}}, \\
& 14n_{18212}, 14n_{21563}, \overline{14n_{22150}}, \overline{14n_{22789}}, \overline{14n_{24246}}, 14n_{25967},
\end{align*}
and in 
Section~\ref{ssec:table}
we verify that these are all Lagrangian slice.

\begin{remark}
Applying Lemma~\ref{lem:lagrangian-slice-criteria}, we determine that there are at most 48 Lagrangian slice knot types with 15 crossings, namely $15n_m$ for $m$ in
\begin{align*}
& \overline{1481}, \overline{11562}, 11847, 11848, 38594, 41697, \overline{43982}, 46734, \overline{46855}, \overline{57450}, \\
& 73973, 77224, \overline{77245}, 77461, 81490, 83506, \overline{83742}, \overline{88825}, 96161, \overline{96452}, \\
& \overline{96790}, 103488, \overline{104659}, \overline{110305}, 110461, 112479, 127845, 127852, \overline{130682}, 131344, \\
& 132539, 133913, 134517, 135516, 136561, 138242, 138810, 139311, \overline{144052}, 145082, \\
& 153611, 153975, 154694, 155137, 155659, 155828, \overline{162371}, \overline{164338}.
\end{align*}
(The Khovanov homology of $15n_{115646}$ has width 4 but satisfies the weaker condition of \cite[Proposition~2.1]{shumakovitch-s}, so we can still use the proof of item \eqref{item:s-invariant} to show that $s(15n_{115646})=2$ and thus rule it out.)  However, we do not claim that all of these actually are Lagrangian slice.
\end{remark}

%%%%%%%%%%%%%%%%%%%%%%%%%

%%%%%%%%%%%%%%%%%%%%%%%%%
\subsection{The census}
\label{ssec:table}

For each of the $23$ knot types $K$ described in the previous
subsection, Table~\ref{table:census-diagrams} provides one Legendrian
knot $\Lambda$ (depicted as a grid diagram), marked to indicate where
one can perform surgery (a pinch move) to
construct a 
Lagrangian concordance $U \prec \Lambda$.
We also note
the cases in which we can prove that $\Lambda \not\prec U$; see
Section~\ref{ssec:nonrev} for discussion.

\newcommand{\knotstr}[3]{
\ifstrequal{#1}{m}
{\overline{\ifnumcomp{#2}{<}{11}{#2_{#3}}{#2n_{#3}}}}
{\ifnumcomp{#2}{<}{11}{#2_{#3}}{#2n_{#3}}}
}

\newcommand{\knotrow}[6]{
$\knotstr{#1}{#2}{#3}$ &
\includegraphics[scale=0.4]{knots/#2n#3} &
\parbox{6cm}{$X: #4$ \\ $\mathrlap{O}{\phantom{X}}: #5$} &
\ifstrequal{#6}{y}{\checkmark}{}
}

\begin{longtable}{lm{2cm}@{\hskip 1.5cm}l@{\hskip 1cm}c}
$K$ & $\Lambda$ & Coordinates & $\Lambda \not\prec U$? \\
\toprule
\endhead
\knotrow{m}{9}{46}{1,6,7,5,3,4,2,8}{5,2,4,8,6,1,7,3}{y}
\\ \midrule
\knotrow{m}{10}{140}{1,2,8,6,7,4,5,3,9}{6,7,4,9,3,1,2,8,5}{}
\\ \midrule
\knotrow{}{11}{139}{1,8,9,7,3,4,2,6,5,10}{7,2,4,10,8,1,5,3,9,6}{}
\\ \midrule
\knotrow{}{12}{582}{1,2,10,8,9,4,5,3,7,6,11}{8,9,4,11,3,1,2,6,5,10,7}{}
\\ \midrule
\knotrow{m}{12}{768}{2,3,10,1,9,6,7,5,8,4}{6,9,7,8,5,10,4,2,3,1}{y}
\\ \midrule
\knotrow{}{12}{838}{8,6,9,7,4,5,2,10,1,3}{5,2,3,10,8,1,9,4,6,7}{y}
\\ \midrule

\knotrow{m}{13}{579}{9,7,8,6,2,11,1,10,4,5,3}{4,10,1,9,7,5,6,3,11,2,8}{y}
\\ \midrule
\knotrow{m}{13}{3158}{11,9,10,8,6,7,5,3,4,1,2}{3,1,7,2,9,4,8,6,11,5,10}{y}
\\ \midrule
\knotrow{}{13}{3523}{1,10,11,9,3,4,2,6,5,8,7,12}{9,2,4,12,10,1,5,3,7,6,11,8}{}
\\ \midrule
\knotrow{}{13}{4514}{2,10,11,4,9,8,6,1,7,3,5}{9,6,7,8,5,10,3,4,2,11,1}{}
\\ \midrule
\knotrow{m}{13}{4659}{4,8,5,10,3,9,11,7,2,1,6}{11,2,9,7,8,4,6,1,10,5,3}{}
\\ \midrule

\knotrow{m}{14}{2459}{5,7,1,11,12,9,4,3,2,6,10,8}{9,12,10,2,8,6,7,1,5,3,4,11}{}
\\ \midrule
\knotrow{}{14}{2601}{2,9,7,10,5,8,4,6,1,3,11}{8,6,1,2,11,3,9,10,5,7,4}{y}
\\ \midrule
\knotrow{m}{14}{8579}{8,5,10,2,9,6,7,4,1,3,11}{1,9,7,8,3,11,2,10,5,6,4}{y}
\\ \midrule
\knotrow{m}{14}{12406}{8,6,11,2,10,1,3,9,5,7,4}{1,10,7,9,5,6,8,4,2,3,11}{y}
\\ \midrule
\knotrow{m}{14}{14251}{10,4,3,9,5,7,8,6,2,11,1}{5,2,8,4,11,10,1,9,7,3,6}{}
\\ \midrule
\knotrow{m}{14}{15581}{1,9,8,5,11,7,10,4,6,2,3}{4,2,10,9,6,3,5,8,1,7,11}{y}
\\ \midrule

\knotrow{}{14}{18212}{1,2,12,10,11,4,5,3,7,6,9,8,13}{10,11,4,13,3,1,2,6,5,8,7,12,9}{}

\\ \midrule
\knotrow{}{14}{21563}{4,1,10,11,9,6,7,8,5,2,3}{11,7,5,8,2,10,4,3,1,6,9}{y}
\\ \midrule
\knotrow{m}{14}{22150}{10,6,8,5,12,7,1,9,11,4,2,3}{4,1,2,11,9,10,8,6,3,12,5,7}{y}
\\ \midrule
\knotrow{m}{14}{22789}{2,12,6,11,8,4,10,9,3,7,5,1}{11,8,10,7,3,1,2,5,6,4,12,9}{}
\\ \midrule
\knotrow{m}{14}{24246}{11,9,10,7,8,6,2,1,5,4,12,3}{8,12,4,11,5,1,7,3,9,2,6,10}{}
\\ \midrule
\knotrow{}{14}{25967}{9,1,12,5,8,6,7,4,2,3,10,11}{12,10,2,1,11,9,3,8,5,6,4,7}{}
\\ \bottomrule
\caption{Lagrangian slice knots through 14 crossings.}
\label{table:census-diagrams}
\end{longtable}

%%%%%%%%%%%%%%%%%%%%%%%%%
\subsection{Non-reversible concordances}
\label{ssec:nonrev}

Here we study the question of which concordances $U \prec
\Lambda$ from the census in Section~\ref{ssec:table} are
non-reversible in the sense that $\Lambda \not\prec U$,
i.e., there is a concordance from $U$ to $\Lambda$ but not vice versa.
Our results are summarized in Table~\ref{table:census-rulings} below.

An obstruction to the existence of a concordance giving $\Lambda \prec U$
is provided by Corollary~\ref{cor:ruling-polynomial-uku}. For $6$ of
the $23$ Legendrian knots $\Lambda$ from Table~\ref{table:census-diagrams}
(of types
$\overline{9_{46}}$, $12n_{838}$, $\overline{13n_{3158}}$,
$14n_{2601}$, $\overline{14n_{15581}}$, $14n_{21563}$),
  the ungraded ruling polynomial $R^1_\Lambda(z)$ is not equal to $1$,
  and so $\Lambda \not\prec U$ by
  Corollary~\ref{cor:ruling-polynomial-uku}.
  In $5$ additional cases
  ($\overline{12n_{768}}$, $\overline{13n_{579}}$,
  $\overline{14n_{8579}}$, $\overline{14n_{12406}}$, $\overline{14n_{22150}}$),
  we may similarly apply Theorem~\ref{thm:twn-ruling-polynomials} to
  the $2$-cable $\Lambda_2$ of $\Lambda$:
  the ungraded ruling polynomial 
  for $\Lambda_2$ is
  not equal to $z^{-1}$, and hence
  Theorem~\ref{thm:twn-ruling-polynomials} shows that
  $\Lambda\not\prec U$.
Note that this argument shows $\Lambda\not\prec U$ for all Legendrian
representatives $\Lambda$ of these knot types, not just the particular
ones from Table~\ref{table:census-diagrams}; see Remark~\ref{rmk:smoothtype}.

\newcommand{\knotrulings}[7]{$\knotstr{#1}{#2}{#3}$ &
%$#4$ & $#5$ &
$#6$ & $#7$}

\begin{table}
\begin{centering}
\begin{tabular}{r|r|l}
%{r|r|l|r|l}
$K$ &
% $n$ & $z^{n-1}R_{K_n}(z)$ (graded) &
$n$ & $z^{n-1}R^1_{\Lambda_n}(z)$ \\%(ungraded) \\
\hline
\knotrulings{m}{9}{46}{1}{2}{1}{2} \\
\knotrulings{m}{10}{140}{>4}{}{>2}{} \\
\knotrulings{}{11}{139}{>3}{}{>2}{} \\
\knotrulings{}{12}{582}{>3}{}{>2}{} \\
\knotrulings{m}{12}{768}{>3}{}{2}{z^{12} + 12z^{10} + 49z^8 + 78z^6 + 41z^4 + 4z^2 + 1} \\
\knotrulings{}{12}{838}{1}{2}{1}{2} \\
\knotrulings{m}{13}{579}{>4}{}{2}{z^{12} + 9z^{10} + 25z^8 + 21z^6 + 4z^4 + 1} \\
\knotrulings{m}{13}{3158}{1}{z^2+3}{1}{z^4+3z^2+3} \\
\knotrulings{}{13}{3523}{>4}{}{>2}{} \\
\knotrulings{}{13}{4514}{>3}{}{>2}{} \\
\knotrulings{m}{13}{4659}{>4}{}{>2}{} \\

\knotrulings{m}{14}{2459}{>4}{}{>2}{} \\
\knotrulings{}{14}{2601}{1}{z^2+3}{1}{z^4+3z^2+3} \\
\knotrulings{m}{14}{8579}{>3}{}{2}{3z^{12} + 27z^{10} + 81z^8 + 93z^6 + 38z^4 + 4z^2 + 1} \\
\knotrulings{m}{14}{12406}{>3}{}{2}{3z^{12} + 26z^{10} + 72z^8 + 68z^6 + 17z^4 + 1} \\
\knotrulings{m}{14}{14251}{>3}{}{>2}{} \\
\knotrulings{m}{14}{15581}{1}{2}{1}{z^4+2z^2+2} \\
\knotrulings{}{14}{18212}{>3}{}{>2}{} \\
\knotrulings{}{14}{21563}{1}{z^2+2}{1}{z^4+3z^2+2} \\
\knotrulings{m}{14}{22150}{>4}{}{2}{z^{12}+8z^{10}+18z^8+8z^6+1} \\
\knotrulings{m}{14}{22789}{>3}{}{>2}{} \\
\knotrulings{m}{14}{24246}{>4}{}{>2}{} \\
\knotrulings{}{14}{25967}{>3}{}{>2}{} \\
\end{tabular}
\end{centering}

\caption{For each Lagrangian slice knot $K$ through $14$ crossings with Legendrian representative $\Lambda$ from Table~\ref{table:census-diagrams},
the least $n$ for which the ruling polynomial of the $n$-cable of
$\Lambda$ is known to differ from $1$.  An entry of the form ``$n > 2$''
indicates that $z^{n-1}R^1_{\Lambda_n}(z) = 1$ for all $n \leq
2$. 
}
\label{table:census-rulings}
\end{table}

We have not been able to rule out the possibility that $\Lambda\prec
U$ for $\Lambda$ representing the remaining $12$ knot types. One indication that it may be
difficult to do so for at least some of these knots is the curious
family of Legendrian $P(-m,-3,3)$ pretzel knots for $m \geq 4$, of which
$\overline{10_{140}}$, $11n_{139}$, $12n_{582}$, $13n_{3523}$,
$14n_{18212}$ are shown in
Table~\ref{table:census-diagrams}, with a natural generalization to
all $m \geq 4$.
For this family of Legendrian knots, it can be shown
that the Legendrian contact homology DGA is stable tame isomorphic to
the DGA for the unknot $U$ (even over $\zz[t,t^{-1}]$), and so these
knots are indistinguishable from $U$ from the viewpoint of contact
homology.
In fact, it follows from this fact that any $n$-cable of these knots
has the same ungraded ruling polynomial as the $n$-cable of the
unknot, though we omit the proof.

%%%%%%%%%%%%%%%%%%%%%%%%%

%%%%%%%%%%%%%%%%%%%%%%%%%

\bibliographystyle{hplain}
\bibliography{References}

\end{document}